\documentclass[a4paper,10pt]{scrartcl}
\usepackage[utf8]{inputenc}
\usepackage[english]{babel}
\usepackage[fixlanguage]{babelbib}
\usepackage[noadjust]{cite}
\usepackage{amssymb, amsmath, amstext, amsopn, amsthm, amscd, amsxtra, amsfonts}
\usepackage{yfonts, mathrsfs}
\usepackage{colonequals}
\usepackage{enumerate}
\usepackage{graphicx}%
\usepackage{colonequals}
\usepackage{tikz-cd}
\usetikzlibrary{cd}
\usepackage{hyperref}%
\hypersetup{
urlcolor = red,
colorlinks = true,
linkcolor = blue,
citecolor = blue,
linktocpage = true,
pdftitle = {On the vertical bisections},
pdfauthor = {Alexander Schmeding},
bookmarksopen = true,
bookmarksopenlevel = 1,
unicode = true,
hypertexnames =false
}%
\bibliographystyle{new}%
%
\swapnumbers
\newtheorem{thm}{Theorem}[section]
\newtheorem{la}[thm]{Lemma}
\newtheorem{prop}[thm]{Proposition}

\theoremstyle{definition}
\newtheorem{defn}[thm]{Definition}
\newtheorem{exa}[thm]{Example}
\newtheorem{rem}[thm]{Remark}
\newtheorem{numba}[thm]{\!\!}

\newcommand{\mto}{\mapsto}
\newcommand{\N}{{\mathbb N}}
\newcommand{\R}{{\mathbb R}}
\newcommand{\T}{{\mathbb T}}
\newcommand{\Z}{{\mathbb Z}}

\newcommand{\Q}{{\mathbb Q}}

\newcommand{\Sph}{{\mathbb S}}

\newcommand{\cG}{{\mathcal G}}

\newcommand{\cB}{{\mathcal B}}

\newcommand{\sub}{\subseteq}

\DeclareMathOperator{\id}{id}

\newcommand{\IG}{{\mathcal IG}}

%
%
\newcommand{\toto}{\ensuremath{\nobreak\rightrightarrows\nobreak}}
\newcommand{\coloneq}{\colonequals}


\newcommand{\Bis}{\ensuremath{\operatorname{Bis}}}
\newcommand{\vBis}{\ensuremath{\operatorname{vBis}}}

\DeclareMathOperator{\Diff}{Diff}

\DeclareMathOperator{\Evol}{Evol}
\DeclareMathOperator{\evol}{evol}

\DeclareMathOperator{\ev}{ev}

\DeclareMathOperator{\one}{\mathbf{1}}


\newcommand{\Lf}{\mathbf{L}}
\begin{document}
\title{The Lie group of vertical bisections of a regular Lie groupoid}
 \author{Alexander Schmeding\footnote{TU Berlin, Germany
\href{mailto:schmeding@tu-berlin.de}{schmeding@tu-berlin.de}
}%
}
\date{}
{\let\newpage\relax\maketitle}

\begin{abstract}
In this note we construct an infinite-dimensional Lie group structure on the group of vertical bisections of a regular Lie groupoid. We then identify the Lie algebra of this group and discuss regularity properties (in the sense of Milnor) for these Lie groups. If the groupoid is locally trivial, i.e.\ a gauge groupoid, the vertical bisections coincide with the gauge group of the underlying bundle. Hence the construction recovers the well known Lie group structure of the gauge groups. To establish the Lie theoretic properties of the vertical bisections of a Lie groupoid over a non-compact base, we need to generalise the Lie theoretic treatment of Lie groups of bisections for Lie groupoids over non-compact bases.
\end{abstract}

\medskip

\textbf{MSC2010:} 
22E65 (primary);  
22A22, 
58D15, 
58H05\\[2.3mm]
\textbf{Key words:}
regular Lie groupoid, Lie algebroid, infinite-dimensional Lie group, regularity of Lie groups, manifold of mappings, local triviality, gauge groupoid, gauge group, 

\tableofcontents

\section*{Introduction and statement of results} \addcontentsline{toc}{section}{Introduction and statement of results}
 Lie groupoids have found wide application in differential geometry. In particular, they can be used to formulate the symmetry of objects with bundle structure. They generalise Lie groups and their Lie theory exhibits features not present in the theory of (finite-dimensional) Lie groups (e.g.\ the integrability issue of Lie algebroids discussed in \cite{CaF,MR2795150}). 
To every (finite-dimensional) Lie groupoid one can construct an (infinite-dimensional) Lie group, the group of (smooth) bisections of the Lie groupoid \cite{Ryb,SaW,AS19}. Moreover, one can show that the geometry and representation theory of this group is closely connected to the underlying Lie groupoid. If the Lie groupoid is locally trivial (i.e.\ represents a principal fibre bundle), one can even recover the Lie groupoid from the infinite-dimensional Lie group \cite{SaW,SaW2,SaW3,AS19}.

In the present note we develop the Lie theory for the group of vertical bisections. A vertical bisection is a smooth map which is simultaneously a section for the source and the target map of the groupoid. We prove that for regular Lie groupoids, the vertical bisections form an infinite-dimensional Lie group which is an initial Lie subgroup of the group of bisections.
Before we explain this result, lets motivate the interest in groups of vertical bisections. 
Firstly, we restrict to the special case of a gauge groupoid $\text{Gauge} (P) := (P\times P / H \toto M)$ of a principal $H$-bundle $P\rightarrow M$ over a compact base $M$.\footnote{Gauge groupoids are locally trivial Lie groupoids and every locally trivial Lie groupoid arises as a gauge groupoid of a principal bundle, \cite[\S 1.3]{MAC}. Here $M$ being compact allows us to ignore some technicalities arising in the non-compact case (which is similar to the compact case (if one replaces the results in \cite{Woc} by \cite{Schue,MR1040392}).} Then the Lie group of bisections $\Bis (\text{Gauge} (P))$ is isomorphic (as an infinite-dimensional Lie group) to the group of (smooth) bundle automorphism $\text{Aut}(P)$, \cite[Example 2.16]{SaW}. Translating the vertical bisections to the bundle picture, they are identified with the automorphisms of $P$ descending to the identity on the base. Hence the group of vertical bisections $\vBis (\text{Gau} (P))$ is isomorphic to the gauge group $\text{Gau}(P)$ of the principal $H$-bundle. Thus \cite{Woc} shows that we obtain a Lie group extension
\begin{equation}\label{eq: gauge}
\begin{tikzcd}
 \vBis (\text{Gauge}(P)) \arrow[r,hookrightarrow] \arrow[d,"\cong"] & \Bis (\text{Gauge}(P)) \arrow[r,twoheadrightarrow] \arrow[d,"\cong"] & \Diff_{[P]} (M) \ar[d,"\cong"] \\
 \text{Gau}(P) \arrow[r,hookrightarrow] & \text{Aut} (P) \arrow[r,twoheadrightarrow]  & \Diff_{[P]} (M) \end{tikzcd}
\end{equation}
where $\Diff_{[P]}(M)$ is a certain open subgroup of the group $\Diff (M)$ of diffeomorphisms of $M$. Thus for locally trivial Lie groupoids our results on the vertical bisections are not new as they can be derived from the Lie theory of gauge groups, \cite{Woc,Schue}. In the present paper we seek to generalise these results to a larger class of Lie groupoids not related to principal bundles.

Secondly, (vertical) bisections are closely connected to the differential geometry of the underlying Lie groupoid. Thinking of a bisection $\sigma$ as a generalised element of the Lie groupoid, we obtain an inner automorphism, \cite[Definition 1.4.8]{MAC} and a surjective morphism onto the, in analogy to the Lie group case so-called, inner automorphisms of the groupoid
$$\pi \colon \Bis (\cG) \rightarrow \text{Inn} (\cG) \subseteq \text{Aut} (\cG), \quad \sigma \mapsto I_\sigma, \quad I_\sigma (g) := \sigma (\beta (g)) g \sigma(\alpha (g))^{-1}.$$
The vertical bisections are mapped precisely to the subgroup of inner automorphisms which preserve source and target fibres. Though the global structure of the groups $\text{Inn} (\cG), \text{Aut} (\cG)$ has to our knowledge not yet been studied, these groups are closely connected to the geometry of the Lie groupoid, \cite[Section 5]{MR3494992}.\footnote{Another example along these lines can be found in \cite[Appendix]{MR3838387}, where geometric objects such as torsion free connections are constructed using the vertical bisections of the jet groupoid.} 
Apart from the connection to inner automorphisms, we have shown in \cite{SaW2,SaW3} that for certain Lie groupoids the groupoid can be recovered from their groups of bisections. Namely, in \cite[Section 4]{SaW2} certain Lie subgroups of the bisections were crucial to this (re-)construction process. So far this process is restricted to locally trivial Lie groupoids and a generalisation would require different ingredients. One candidate which could provide additional structure usable in this (re-)construction could be the group of vertical bisections. Note however, that this group alone does not carry enough information to deal with the reconstruction of general Lie groupoids.  \medskip

In the present article, we work in the so-called Bastiani setting of infinite-dimensional analysis (i.e.\ a mapping is smooth if all iterated directional derivatives exist and are continuous, cf. references in Appendix \ref{app:calc}). Our main result is the construction of an infinite-dimensional Lie group structure on the group of vertical bisections of a regular Lie groupoid, cf.\ Theorem \ref{thm: vertBis}. Moreover, we establish some Lie theoretic properties and clarify the relation of this structure to the Lie group structure on the full group of bisections.

While we concentrate in the present paper on finite-dimensional Lie groupoids, one should be able to extend these results to Lie groupoids with infinite-dimensional space of arrows. If the base of the groupoid is compact, the theory can be adapted using results from \cite{SaW} which deal with the general case. For non-compact base manifolds it is conjectured in \cite{HS16} that similar results can be achieved.

\section{Preliminaries and the Lie group of bisections}\label{sec:prelim}
We shall write $\N=\{1,2,\ldots\}$ and $\N_0:=\N\cup\{0\}$.
Hausdorff locally convex real topological vector spaces
will be referred to as locally convex spaces.
All manifolds will be assumed to be Hausdorff spaces and if a manifold is finite-dimensional we require that it is $\sigma$-compact (for infinite-dimensional manifolds no such requirements are made).
For manifolds $M,N$ we let $C^\infty (M,N)$ denote the set of all (Bastiani) smooth mappings from $M$ to $N$. Furthermore, we denote by $\mathcal{D} (M,TN)$ the smooth mappings $s\colon M\rightarrow TN$ such that $s=0$ off some compact set $K\subseteq M$ (i.e.\ the ``space of all smooth mappings with compact support'').  

\begin{numba}
 In the following $\mathcal{G} = (G\toto M)$ will be a (finite-dimensional) Lie groupoid with source map $\alpha$ and target map $\beta$. We denote by $\iota \colon G \rightarrow G$ the inversion and by $\one \colon M \rightarrow G$ the unit map.
\end{numba}

\begin{numba}\label{numba: bis}
 To the Lie groupoid $\cG$ we associate a group of smooth mappings, the so-called \emph{bisection group}. To this end, let 
 $$\Bis (\mathcal{G}) := \{\sigma \in C^\infty (M,G) \mid \alpha \circ \sigma = \id_M  \text{ and } \beta \circ \sigma \in \Diff (M)\},$$ 
 be the set of bisections of $\mathcal{G}$.
 The set $\Bis (\cG)$ is a group with respect to the operations 
 \begin{align*}
  \sigma \star \tau (x):= \sigma (\beta \circ \tau (x)) \tau(x) , \qquad \sigma^{-1} (x) = \iota \circ \sigma \qquad x\in M.
 \end{align*}
\end{numba}

\begin{prop}[cf.\ {\cite[Theorem 3.8]{SaW} and \cite[Proposition 1.3]{AS19}}]\label{prop:bisLie}
 Let $\mathcal{G}$ be a finite dimensional Lie groupoid, then $\Bis (\mathcal{G})$ is a submanifold of $C^\infty (M,G)$\footnote{If $M$ is non-compact, the topology on $C^\infty (M,G)$ is the so-called fine very strong topology, cf.\ \cite{HS16} and see \cite{Mic} for the construction of the manifold structure. If $M$ is compact, the fine very strong topology coincides with the familiar compact-open $C^\infty$-topology.} and this structure turns the bisections into an infinite dimensional Lie group.
\end{prop}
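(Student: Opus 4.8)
The plan is to realise $\Bis(\mathcal{G})$ as an open subset of a submanifold of the mapping manifold $C^\infty(M,G)$ and then to check that the group operations are smooth. First I would recall that, since $\alpha\colon G\to M$ is a surjective submersion, the set
\[
C^\infty_\alpha(M,G) := \{\sigma\in C^\infty(M,G)\mid \alpha\circ\sigma=\id_M\}
\]
of smooth sections of $\alpha$ is a closed splitting submanifold of $C^\infty(M,G)$: this is the standard fact that the space of sections of a (locally trivial, finite-dimensional) fibre bundle is a submanifold of the manifold of all mappings, modelled on spaces of sections of the vertical pullback bundle $\sigma^*T^\alpha G$, where $T^\alpha G=\ker T\alpha$. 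One has to be a little careful about the topology on $C^\infty(M,G)$ when $M$ is non-compact — the fine very strong topology of \cite{HS16,Mic} — but the submanifold statement holds in that setting as well. Then $\Bis(\mathcal{G}) = \{\sigma\in C^\infty_\alpha(M,G)\mid \beta\circ\sigma\in\Diff(M)\}$, so it suffices to show that this is an \emph{open} subset of $C^\infty_\alpha(M,G)$.

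For openness, I would use that $\Diff(M)$ is open in $C^\infty(M,M)$ (again in the fine very strong topology; for compact $M$ this is classical, for non-compact $M$ it is part of the manifold-of-mappings package cited above) together with continuity of the pushforward map $\beta_*\colon C^\infty(M,G)\to C^\infty(M,M)$, $\sigma\mapsto\beta\circ\sigma$. Then $\Bis(\mathcal{G}) = (\beta_*)^{-1}(\Diff(M))\cap C^\infty_\alpha(M,G)$ is open in $C^\infty_\alpha(M,G)$, hence a submanifold of $C^\infty(M,G)$, with model space $\Gamma_c$ or the appropriate section space of $\one^*T^\alpha G\cong$ (the Lie algebroid) — more precisely, the tangent space at the unit section is the space of sections of the Lie algebroid $\mathsf{A}\mathcal{G}=\one^*T^\alpha G$, in the fine very strong topology.

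Next I would verify the group axioms hold smoothly. The multiplication $\sigma\star\tau(x)=\sigma(\beta\circ\tau(x))\,\tau(x)$ is built from: the pushforward $\tau\mapsto\beta\circ\tau$, composition $C^\infty(M,M)\times C^\infty(M,G)\to C^\infty(M,G)$, the diagonal-type map, and the groupoid multiplication $m\colon G\times_M G\to G$ postcomposed. The key analytic inputs are (i) that composition and postcomposition maps between manifolds of mappings are smooth — this is exactly where one invokes the exponential-law / $\Omega$-lemma type results for Bastiani calculus in the fine very strong topology, which is the technical heart of \cite{SaW,HS16,Mic} — and (ii) that $\Diff(M)$ is itself a Lie group so that $\tau\mapsto(\beta\circ\tau)$ lands in a group on which inversion is smooth, needed for the inversion $\sigma^{-1}=\iota\circ\sigma$ after reparametrisation by $(\beta\circ\sigma)^{-1}$. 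Assembling these, $\star$ and inversion are smooth, and $\Bis(\mathcal{G})$ is a Lie group. Since this is quoted verbatim from \cite[Theorem 3.8]{SaW} and \cite[Proposition 1.3]{AS19}, the cleanest route is simply to cite those results; a self-contained argument would reproduce the above.

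The main obstacle is purely the non-compact case: making precise that $C^\infty(M,G)$ with the fine very strong topology is a smooth manifold, that sections of a bundle form a submanifold of it, that $\Diff(M)$ is open and a Lie group, and — most delicately — that the composition maps entering $\star$ are Bastiani smooth for that topology. For compact $M$ all of this is standard; for non-compact $M$ it relies on the machinery developed in \cite{Mic,HS16}, and indeed the paper flags that extending the bisection theory to non-compact bases is itself one of the contributions needed here. Everything else — checking $\alpha\circ(\sigma\star\tau)=\id_M$, associativity, the unit being $\one$ — is a routine pointwise computation in the groupoid $G$.
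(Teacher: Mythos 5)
Your plan follows essentially the same route as the cited proofs (the paper itself offers no argument beyond the citations to \cite[Theorem 3.8]{SaW} and \cite[Proposition 1.3]{AS19}, whose structure is recalled later in the proof of Proposition \ref{prop: LA}): cut out the sections of $\alpha$ as a splitting submanifold of $C^\infty(M,G)$, observe that the condition $\beta\circ\sigma\in\Diff(M)$ is open because $\beta_*$ is continuous and $\Diff(M)$ is open in the fine very strong topology, and then check smoothness of $\star$ and inversion via smoothness of pushforward, composition and the groupoid operations.

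One step deserves correction, though it does not derail the argument: you justify that $\{\sigma\mid \alpha\circ\sigma=\id_M\}$ is a submanifold by appealing to the ``sections of a locally trivial finite-dimensional fibre bundle'' theorem. The source map of a general Lie groupoid is a surjective submersion but need \emph{not} be a locally trivial fibration (the $\alpha$-fibres over points in different orbits need not be diffeomorphic, e.g.\ for holonomy groupoids of foliations), so that theorem does not literally apply. The references you cite avoid this by proving instead that the pushforward $\alpha_*\colon C^\infty(M,G)\to C^\infty(M,M)$ is a submersion (near sections, respectively restricted to the open set $\beta_*^{-1}(\Diff(M))$) and realising $\Bis(\cG)=\bigl(\alpha_*|_{\beta_*^{-1}(\Diff(M))}\bigr)^{-1}(\id_M)$ as the preimage of a point; alternatively one can use the version of the section-manifold theorem valid for surjective submersions. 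With that substitution your outline coincides with the cited proofs, including the identification of the model space with the (compactly supported) sections of $\one^*\ker T\alpha$ and the reliance on the $\Omega$-lemma type smoothness results for the fine very strong topology in the non-compact case.
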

 
\begin{rem}\label{rem: regular}
 Note that \cite{Ryb} establishes the Lie group structure of $\Bis (\cG)$ in the inequivalent convenient setting of global analysis (cf.\ \cite{KaM}). In general our results will imply the results from loc.cit.\ as they entail continuity of the underlying mappings, which is not automatic in the convenient setting.
\end{rem}

In the rest of this section we will prove results and discuss the necessary changes to identify the Lie algebra of bisection groups for Lie groupoids over a non-compact base. 

\begin{la}[{\cite[Corollary A.6]{AS19}}]\label{lem: ev:subm} 
 Let $\cG = (G \toto M)$ be a finite-dimensional Lie groupoid, then the evaluation mapping $\ev \colon \Bis (\cG) \times M\rightarrow G ,\ (\sigma , m) \mapsto \sigma (m)$
 is a smooth submersion.
\end{la}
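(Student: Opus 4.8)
The plan is to deduce the smoothness and submersion property of $\ev$ from the local structure of the manifold $\Bis(\cG)$ as a submanifold of $C^\infty(M,G)$, together with the well-known fact that the evaluation map on manifolds of mappings is smooth. First I would recall that since $\Bis(\cG)$ is a submanifold of $C^\infty(M,G)$, the inclusion $\Bis(\cG)\hookrightarrow C^\infty(M,G)$ is smooth, and the evaluation $C^\infty(M,G)\times M\to G$, $(f,m)\mapsto f(m)$, is smooth (this is one of the basic properties of the manifold-of-mappings functor, valid in the Bastiani setting, whether $M$ is compact or not after equipping $C^\infty(M,G)$ with the fine very strong topology). Composing these two smooth maps with the (smooth) inclusion on the first factor gives smoothness of $\ev$ immediately.

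For the submersion statement, the key point is to produce, at each $(\sigma,m)\in\Bis(\cG)\times M$, a smooth local section of $\ev$ through that point. Without loss of generality (using the left translation $\tau\mapsto \sigma^{-1}\star\tau$ on $\Bis(\cG)$, which is a diffeomorphism, and the corresponding translation in $G$) one reduces to the case $\sigma=\one\circ\text{pr}$, i.e.\ a unit bisection, and $g=\one(m)$. Near such a point the manifold structure on $\Bis(\cG)$ is modelled, via the construction in Proposition~\ref{prop:bisLie}, on a space of sections of the Lie algebroid $A\cG=\ker T\alpha|_M$; concretely, one identifies a neighbourhood of the unit bisection with (an open subset of) $\Gamma_c(A\cG)$ or the relevant space of compactly-supported/appropriately-decaying sections, via a fixed local addition (spray) on $G$ adapted to the $\alpha$-fibres. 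Given a tangent vector $v\in T_{\one(m)}G$ one chooses a compactly supported section $X$ of $A\cG$ with a prescribed $1$-jet behaviour at $m$ so that the associated bisection $\exp(X)$ satisfies $\ev(\exp(X),m)$ hitting $v$ in the appropriate direction; varying $m$ in a chart and using a cutoff function supported near $m$ produces a genuine smooth local section $M\supseteq U\to \Bis(\cG)\times M$ of $\ev$. This shows $T_{(\sigma,m)}\ev$ is surjective and $\ev$ admits local sections, hence is a submersion.

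The main obstacle is the non-compact base: one must be careful that the local model of $\Bis(\cG)$ near a bisection is a locally convex space of sections with the fine very strong topology, and that the cutoff/gluing used to build local sections of $\ev$ is continuous (indeed smooth) for that topology. This is exactly the kind of technical point the paper flags as needing a generalisation of the compact-base treatment; concretely one needs that for a fixed compact neighbourhood of $m$, the map sending a tangent vector (or a finite-dimensional jet datum at $m$) to a compactly supported section with that datum, composed with the chart of $\Bis(\cG)$, is smooth — this follows from smoothness of parameter-dependent constructions of bump functions and the fact that the inclusion of a space of sections supported in a fixed compact set into the full section space is a smooth linear map. Granting this, the argument is a routine application of the implicit-function-free criterion: a smooth map admitting smooth local sections through every point of the domain is a submersion. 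Since all of this is already carried out in \cite[Corollary A.6]{AS19} for the reference case, I would simply cite that and indicate the (minor) modifications needed for the non-compact fine-very-strong setting, which are precisely those developed in the remainder of this section.
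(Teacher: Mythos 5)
The paper itself offers no proof of this lemma: it is quoted verbatim from \cite[Corollary A.6]{AS19}, so the only ``proof'' in the text is that citation. Your sketch is a reasonable reconstruction of how that reference argues, and your fallback (cite \cite{AS19} and note the fine-very-strong topology setting) is exactly what the paper does. Two points deserve care if you were to write the argument out. First, a local section of $\ev \colon \Bis(\cG)\times M \to G$ must be defined on an open subset of the \emph{target} $G$, not of $M$ as you wrote; the natural construction sends an arrow $h$ near $\sigma(m)$ to $(\sigma_h, \alpha(h))$, where $\sigma_h$ is a bisection through $h$ depending smoothly on $h$ (built, as you say, from an algebroid-section chart at the unit bisection plus a cutoff with support in a fixed compact set, then transported by the translation $\lambda_\sigma$ on $\Bis(\cG)$ and the corresponding diffeomorphism $\gamma(\sigma,\cdot)$ of $G$ from Lemma \ref{la: natact} — note this is the action map, not a literal translation). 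Second, your ``implicit-function-free criterion'' (smooth local sections through every point imply submersion) is not valid for arbitrary locally convex targets; it does hold here because $G$ is finite-dimensional, which is precisely the setting of the criterion in \cite{SUB} used in \cite{AS19}, so you should make that hypothesis explicit. With these repairs your route coincides in substance with the cited proof.
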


\begin{la}\label{la: natact}
 Let $\cG = (G \toto M)$ be a finite-dimensional Lie groupoid, then the canonical action of the bisection group $\gamma \colon \Bis (\cG) \times G\rightarrow G , \quad (\sigma , g) \mapsto \sigma (\beta(g)).g$ is smooth.
\end{la}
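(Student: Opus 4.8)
The plan is to write $\gamma$ as a composite of smooth maps, the only nontrivial ingredient being the smoothness of the evaluation map supplied by Lemma~\ref{lem: ev:subm}. Note first that $\gamma$ is well defined: for $\sigma\in\Bis(\cG)$ and $g\in G$ we have $\alpha(\sigma(\beta(g)))=\beta(g)$ since $\alpha\circ\sigma=\id_M$, so $\sigma(\beta(g))$ and $g$ are composable and the groupoid product $\sigma(\beta(g)).g$ makes sense. Writing $\mu\colon G^{(2)}\to G$ for the partial multiplication of $\cG$, defined on the embedded submanifold $G^{(2)}=\{(h,g)\in G\times G\mid \alpha(h)=\beta(g)\}$ of composable arrows, we can factor
$$\Bis(\cG)\times G \xrightarrow{\ \id_{\Bis(\cG)}\times(\beta,\id_G)\ } \Bis(\cG)\times M\times G \xrightarrow{\ \ev\times\id_G\ } G\times G \xrightarrow{\ \mu\ } G ,$$
where in the last arrow we have used that, by the observation just made, the image of the middle composite is contained in $G^{(2)}$.

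Now smoothness follows step by step. The first map is smooth because $\beta\colon G\to M$ is a structure map of the Lie groupoid, hence smooth, and because both forming Cartesian products of smooth maps and composing smooth maps preserve smoothness in the Bastiani setting. The second map is smooth by Lemma~\ref{lem: ev:subm} (only the smoothness of $\ev$, not its submersion property, is needed here). Composing these, $(\sigma,g)\mapsto(\sigma(\beta(g)),g)$ is a smooth map into $G\times G$ whose image lies in $G^{(2)}$; since $G^{(2)}$ is an embedded submanifold of the finite-dimensional manifold $G\times G$, this map is smooth as a map into $G^{(2)}$. Finally $\mu$ is smooth, again as a structure map of $\cG$, so the composite $\gamma$ is smooth.

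I do not expect a genuine obstacle here: the one point that deserves a word is the corestriction of the middle composite to the submanifold $G^{(2)}$, which is justified by the elementary fact that a map into a manifold whose image is contained in an embedded finite-dimensional submanifold is already smooth as a map into that submanifold. All the analytic substance—in particular the handling of a possibly non-compact base $M$ and the attendant fine very strong topology on $C^\infty(M,G)$—has been packaged into Lemma~\ref{lem: ev:subm}, and is here merely invoked.
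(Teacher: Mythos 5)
Your proposal is correct and follows essentially the same route as the paper: writing $\gamma$ as the groupoid multiplication composed with $(\sigma,g)\mapsto(\ev(\sigma,\beta(g)),g)$ and invoking Lemma~\ref{lem: ev:subm} for the smoothness of $\ev$. Your additional remarks on well-definedness and the corestriction to the embedded submanifold of composable pairs merely make explicit what the paper leaves implicit.
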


\begin{proof}
 Note that we can write the action as a composition 
 $$\gamma (\sigma, g)= m(\ev (\sigma  , \beta (g)),g), \qquad \sigma \in \Bis (\cG), g \in G$$
 where $m \colon G\times_M G \rightarrow G$ denotes the multiplication map of the Lie groupoid. 
 Since $\ev$ is smooth by Lemma \ref{lem: ev:subm}, we deduce that $\gamma$ is smooth.
\end{proof}

\begin{rem}
 Having established smoothness, similar arguments as in \cite[Proposition 2.4]{SaW2} show that the restricted action 
 $\gamma_g \colon \Bis (\cG) \rightarrow \alpha^{-1} (\alpha(g)) , \sigma \mapsto \gamma (\sigma ,g)$
 is a submersion. However, we do not need this result.
\end{rem}

We adapt now the approach in \cite[Section 3]{SaW} using smoothness $\gamma$ to identify the Lie algebra of the bisection group $\Bis (\cG)$:

\begin{prop}\label{prop: LA}
 The Lie algebra of $\Bis (\cG)$ is isomorphic to the Lie algebra of smooth compactly supported sections $\Gamma_c (\Lf (\cG))$ with the negative of the usual bracket.
\end{prop}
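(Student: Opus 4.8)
The strategy is to identify the Lie algebra of $\Bis(\cG)$ as the tangent space at the identity bisection $\one$, equipped with the bracket coming from the group structure, and then to show this coincides with $\Gamma_c(\Lf(\cG))$ with the negative Lie algebroid bracket. First I would describe the tangent space: a tangent vector to $\Bis(\cG) \subseteq C^\infty(M,G)$ at $\one$ is a vector field along $\one$, i.e.\ a section $X \colon M \to TG$ with $X(m) \in T_{\one(m)}G$; the submanifold condition $\alpha \circ \sigma = \id_M$ forces $T\alpha \circ X = 0$, so $X$ takes values in $\ker T\alpha$ along the units, which is exactly the Lie algebroid $\Lf(\cG) = \one^*(\ker T\alpha)$. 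The compact support condition (needed because $M$ is non-compact and $\Bis(\cG)$ is modelled on compactly supported sections) then gives $T_{\one}\Bis(\cG) \cong \Gamma_c(\Lf(\cG))$ as topological vector spaces. This part is essentially a dictionary translation from the manifold structure in Proposition~\ref{prop:bisLie}.

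\textbf{Computing the bracket.} The heart of the argument is to compute the Lie bracket on $T_{\one}\Bis(\cG)$ coming from the group operations $\star$ and $\iota$. Following \cite[Section 3]{SaW}, the key device is the smooth action $\gamma \colon \Bis(\cG) \times G \to G$ from Lemma~\ref{la: natact}. Differentiating $\gamma$ in the first argument at $\one$ gives, for each $X \in \Gamma_c(\Lf(\cG))$, a vector field $X^\gamma$ on $G$ — in fact $X^\gamma$ is the right-invariant vector field on the source fibres determined by $X$, obtained by right-translating $X(\beta(g))$ to $T_g G$. The assignment $X \mapsto X^\gamma$ is the infinitesimal action associated to $\gamma$. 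Smoothness of $\gamma$ ensures that this infinitesimal action is a well-defined Lie algebra (anti)homomorphism from $(T_{\one}\Bis(\cG), \LB[\cdot,\cdot])$ into the vector fields on $G$ (with the usual bracket), and since the action is effective — the map $\Bis(\cG) \to \Diff$-of-source-fibres-type-objects is injective enough to detect the group, more precisely $\sigma$ is recovered from $\gamma(\sigma,-)$ via $\sigma(m) = \gamma(\sigma, \one(m))$ — the homomorphism $X \mapsto X^\gamma$ is injective. On the other side, the classical Lie algebroid bracket on $\Gamma(\Lf(\cG))$ is \emph{defined} (or at least characterised) precisely by the correspondence with right-invariant vector fields on source fibres: $[X,Y]_{\Lf(\cG)}$ corresponds to $[X^{\text{right}}, Y^{\text{right}}]$ under this same identification. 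Comparing the two, one gets that the group-theoretic bracket and the algebroid bracket agree up to the sign introduced by the left-versus-right convention in the definition of $\star$ (the term $\sigma(\beta\circ\tau(x))\tau(x)$), which accounts for the ``negative of the usual bracket''.

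\textbf{Main obstacle.} The routine parts — identifying the tangent space, checking $\gamma$'s derivative is the right-invariant extension, tracking the sign — I expect to go smoothly. The genuinely delicate point, and where the non-compactness of $M$ bites, is justifying the differentiation of $\gamma$ at $\one$ in the first variable: one needs that for a smooth curve $t \mapsto \sigma_t$ in $\Bis(\cG)$ through $\one$, the map $(t,g) \mapsto \gamma(\sigma_t, g)$ is smooth on $\R \times G$ and that $\frac{\partial}{\partial t}\big|_{t=0}\gamma(\sigma_t, g)$ depends only on $X = \frac{d}{dt}\big|_0 \sigma_t \in \Gamma_c(\Lf(\cG))$, with the dependence being continuous-linear into the appropriate space of vector fields on $G$. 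Because $\Bis(\cG)$ over a non-compact base is modelled on $\Gamma_c$ with its (LF)-type topology, some care is needed that the exponential law / evaluation arguments underpinning ``$\gamma$ smooth $\Rightarrow$ infinitesimal action well-defined'' still apply; this is exactly the kind of technicality the introduction flags as requiring the generalised treatment over non-compact bases, and I would handle it by invoking the smoothness of $\ev$ (Lemma~\ref{lem: ev:subm}) together with the factorisation $\gamma(\sigma,g) = m(\ev(\sigma,\beta(g)),g)$ to reduce everything to finite-dimensional differentiation of $m$ after pulling back along $\ev$.
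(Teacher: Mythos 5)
Your proposal is correct and follows essentially the same route as the paper: identify $T_{\one}\Bis(\cG)$ with $\Gamma_c(\Lf(\cG))$ via the submanifold/kernel-of-$T\alpha_*$ description, then compute the bracket by relating (right-)invariant vector fields on $\Bis(\cG)$ to right-invariant vector fields on $G$ through the smooth action $\gamma$ of Lemma~\ref{la: natact}, with smoothness over the non-compact base handled exactly by the factorisation $\gamma(\sigma,g)=m(\ev(\sigma,\beta(g)),g)$, and the left-versus-right convention producing the anti-isomorphism (the negative of the usual bracket). The paper phrases this as a supplement to the compact-base computation of \cite[Theorem 4.4]{SaW}, but the key steps you outline coincide with its list of modifications.
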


For $M$ compact Proposition \ref{prop: LA} was established as \cite[Theorem 4.4]{SaW}. If $M$ is non-compact the function space topologies are more involved. Though the algebraic calculations carry over verbatim, the proof has to adapt smoothness arguments. 

\begin{proof}[Proof of Proposition \ref{prop: LA}]
 We assume that $M$ is not necessarily compact and $\Bis (\cG)$ is endowed with the Lie group structure from \cite[Proposition 1.3]{AS19}. According to loc.cit. the bisections are a submanifold as the preimage of the submersion $\alpha_*$ (pushforward) via $\Bis (\cG) = (\alpha_*|_{\beta_*{^-1} (\Diff (M)})^{-1} (\id_M)$. Thus we identify the Lie algebra $\Lf (\Bis (\cG) \cong T_{\one} \Bis (\cG) = \text{ker }T_{\one} \alpha_*$. Following \cite[Theorem 10.13]{Mic} the map 
 $$\Phi_{M,G} \colon TC^\infty (M,G) \rightarrow \mathcal{D}(M,TG),\ [t\mapsto \eta (t)] \mapsto (m \mapsto (t\mapsto [\eta (t)(m)]))$$
 is an isomorphism of vector bundles, where tangent vectors are identified with equivalence classes $[\eta]$ of smooth curves $\eta \colon ]-\varepsilon,\varepsilon[ \rightarrow C^\infty (M,G)$ for some $\varepsilon>0$. 
 Up to the identification $T(\alpha_*) = (T\alpha)_*$, whence the kernel of $T\alpha_*$ is
  $$\Gamma_c (\Lf (\cG)) \coloneq \{ \gamma \in \mathcal{D} (M,TG) \mid \forall x\in M, \gamma(x) \in T_{\one_x} \alpha^{-1} (x)\}$$
 the space of compactly supported sections of the Lie algebroid $\Lf (\cG)$. Recall that the Lie bracket is induced by the bracket of right invariant vector fields on the bisections. We indicate now how to supplement the calculations in \cite[Theorem 4.4]{SaW} when the arguments involve smoothness. The following list compiles the tools and changes:
 \begin{enumerate}
 \item $\Phi_{M,G}$ restricts to an isomorphism $\varphi_\cG \colon \Lf (\Bis (\cG)) = T_{\one} \Bis (\cG)\rightarrow \Gamma_c (\Lf (\cG))$.
 \item To $X\in T_{\one} \Bis (\cG)$ associate the vector field $\overrightarrow{\varphi_\cG(X)}$ on $G$, defined via $\overrightarrow{\varphi_\cG(X)} (g) := T(R_g) (\varphi_\cG(X)(\beta (g)))$ (where $R_g$ is the right-translation in the Lie groupoid).
 \item Note that the natural action $\gamma \colon \Bis (\cG) \times G \rightarrow G$ is smooth by replacing \cite[Proposition 3.11]{SaW} with Lemma \ref{la: natact}. Thus in the proof of \cite[Proposition 4.2]{SaW} we replace \cite[Theorem 7.8 (d)]{SaW} with \cite[Lemma 10.15]{Mic} to prove that
 \item For a right invariant vector field $X^\rho$ associated to $X \in T_{\one}\Bis (\cG)$, the vector field $X^\rho \times 0$ is related to $\overrightarrow{\varphi_\cG(X)}$ via $\gamma$.
 \end{enumerate}
 Now as in \cite[Theorem 4.4]{SaW} one calculates the Lie bracket and shows that $\varphi_\cG$ is an antiisomorphism of Lie algebras.  
 \end{proof}

\section{The vertical bisections of a regular Lie groupoid}\label{sec: vertBis}

In this section we will discuss the Lie group structure of the group of vertical bisections of a Lie groupoid $\cG = (G\toto M)$. 

\begin{defn}
A \emph{vertical bisection} of $\cG$ is a bisection $\sigma \in \Bis (\cG)$ such that $\beta \circ \sigma = \id_M$. We denote the subgroup of $\Bis (\cG)$ of all vertical bisections by 
$$\vBis (\mathcal{G}) = \{ \sigma \in \Bis (\mathcal{G}) \mid \beta \circ \sigma = \id_M\}$$
\end{defn}

\begin{exa}Let $\cG$ be a Lie groupoid.
\begin{enumerate}
 \item  If $\cG$ is totally intransitive, i.e.\ source and target mapping coincide and $\cG$ is a Lie group bundle, the vertical bisections coincide with the group of bisections.
 \item If $\cG$ is a transitive Lie groupoid, i.e.\ a gauge groupoid of a principal $H$-bundle $P\rightarrow M$, the vertical bisections coincide with the gauge group of the principal bundle, cf.\ \cite[Example 2.16]{SaW}.
\end{enumerate}
\end{exa}

It is not hard to see that the subgroup $\vBis (\cG)$ is a normal subgroup of $\Bis (\cG)$, \cite[Proposition 1.1.2.]{MR3494992}. As the pushforward $\beta_* \colon C^\infty (M,G) \rightarrow C^\infty (M,M) , f \mapsto \beta \circ f$ is smooth (whence in particular continuous), $\vBis(\cG)$ is a closed subgroup of $\Bis(\cG)$.
Unfortunately, this does not entail that $\vBis (\cG)$ is a Lie subgroup of $\Bis (\cG)$, as $\Bis (\cG)$ is an infinite dimensional Lie group.

The vertical bisections are exactly the bisections which take their values in the isotropy subgroupoid $\IG$ (cf.\ Appendix \ref{app:calc}) of $\cG$, i.e.\ 
$$\vBis (\cG) = \{ \sigma \in \Bis (\cG) \mid \sigma (M) \subseteq \IG\}.$$ 
If $\IG$ would be a Lie subgroupoid (which it in general is not, cf.\ \ref{exa:notLie}), we could identify the vertical bisections as the group of (smooth) bisections of the isotropy subgroupoid. However, there is a large class of Lie groupoids for which at least the connected identity subgroupoid $\IG^\circ$ of the isotropy groupoid $\IG$ (cf.\ Appendix \ref{app:calc}) is an embedded Lie subgroupoid.

\begin{defn}
 A Lie groupoid $\cG = (G\toto M)$ is called \emph{regular Lie groupoid} if the Lie groupoid anchor
 $$(\alpha ,\beta ) \colon G \rightarrow M\times M , \quad g \mapsto (\alpha(g),\beta(g))$$
 is a mapping of constant rank.
\end{defn}

\begin{rem}
 Many important classes of Lie groupoids, such as foliation groupoids of regular foliations, transitive groupoids and locally trivial groupoids are regular groupoids, cf.\ \cite{Wei02, Moer} for more information.
 The regularity condition on the anchor $(\alpha,\beta) \colon G \rightarrow M\times M$ is equivalent to requiring that the anchor $\rho \colon \Lf (\cG) \rightarrow TM$ of the associated Lie algebroid is of constant rank, cf.\ \cite{Wei02}.
\end{rem}

\begin{la}[{\cite[Proposition 2.5]{Moer}}]\label{la:Moerdijk}
 Let $\cG$ be a regular Lie groupoid, then the connected identity subgroupoid $\IG^\circ$ is an embedded normal Lie subgroupoid of $\cG$. Its associated Lie algebroid $\Lf(\IG^\circ)$ is the isotropy subalgebroid $\mathcal{I}\Lf(\cG)$ of $\Lf (\cG)$.\footnote{Recall that the isotropy subalgebroid $\mathcal{I}\Lf(\cG)$ is given fibre-wise as the kernel $\text{ker} (\rho_x)$, or equivalently as the Lie algebra $\Lf (\alpha^{-1}(x)\cap \beta^{-1}(x))$ of the isotropy subgroup at $x$, cf.\ \cite[2.2]{MR2795150}.}
\end{la}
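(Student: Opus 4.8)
The plan is to extract everything from the constant rank of the anchor, used first fibrewise for the source and target maps and then for the algebroid, integrating the isotropy subbundle $\mathcal{I}\Lf(\cG)=\ker\rho$ essentially by hand. Since $\alpha$ is a submersion, each source fibre $\alpha^{-1}(x)$ is a closed embedded submanifold of $G$ containing $\one_x$, and I would first check that the restriction $\beta|_{\alpha^{-1}(x)}\colon\alpha^{-1}(x)\to M$ has locally constant rank, independent of $x$ and of the chosen point of the fibre: at $g\in\alpha^{-1}(x)$ its differential is $T_g\beta|_{\ker T_g\alpha}$, and since $(\alpha,\beta)$ has constant rank $r$ while $T_g\alpha$ is already onto $T_xM$, a dimension count forces $\operatorname{rank}\bigl(T_g\beta|_{\ker T_g\alpha}\bigr)=r-\dim M$. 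By the constant rank theorem $\IG_x=\bigl(\beta|_{\alpha^{-1}(x)}\bigr)^{-1}(x)$ is then a closed embedded submanifold of $\alpha^{-1}(x)$ of dimension $\dim G-r$; being a subgroup with operations inherited from $\cG$ it is a Lie group, with identity component $\IG_x^\circ$ and Lie algebra $T_{\one_x}\IG_x=\ker T_{\one_x}\alpha\cap\ker T_{\one_x}\beta=\ker\rho_x=\mathcal{I}\Lf(\cG)_x$.

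Next I would construct flows preserving $\IG^\circ$. As $\rho$ has constant rank, $\mathfrak{k}:=\mathcal{I}\Lf(\cG)=\ker\rho$ is an embedded vector subbundle of $\Lf(\cG)$, closed under the bracket because $\rho$ is a morphism of Lie algebroids. To a compactly supported section $\xi\in\Gamma_c(\mathfrak{k})$ I associate the right-invariant vector field $\overrightarrow{\xi}$ on $G$, $\overrightarrow{\xi}(g)=TR_g(\xi(\beta(g)))$. By construction it is tangent to the $\alpha$-fibres; using that $\beta\circ R_g$ agrees with $\beta$ on the fibre over $\beta(g)$ and that $\xi(\beta(g))\in\ker\rho_{\beta(g)}$, one sees that it is also tangent to the $\beta$-fibres, hence to every isotropy group $\IG_y$. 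Therefore its flow $\Fl^t_{\overrightarrow{\xi}}$ (complete, since $\xi$ has compact support) is a diffeomorphism of $G$ preserving every source and target fibre, hence preserving $\IG$; restricted to each $\IG_y$ it is a flow, so it preserves each connected component, and thus $\Fl^t_{\overrightarrow{\xi}}(\IG^\circ)=\IG^\circ$. Moreover $\Fl^1_{\overrightarrow{\xi}}$ restricts on $\IG_x$ to the flow of a right-invariant vector field, so $\Fl^1_{\overrightarrow{\xi}}(\one_x)=\exp_{\IG_x}(\xi(x))$, which in particular depends only on $\xi(x)$.

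Now I would assemble the submanifold structure. Define $\exp\colon\mathfrak{k}\to G$ by $\exp(v)=\Fl^1_{\overrightarrow{\xi}}(\one_x)$, where $x$ is the base point of $v$ and $\xi\in\Gamma_c(\mathfrak{k})$ is any section with $\xi(x)=v$; by the previous step this is well defined, and (locally trivialising $\mathfrak{k}$ and invoking smooth dependence of solutions of ODEs on initial conditions and parameters) it is smooth, equals $\one$ on the zero section, and along the zero section its differential restricts to the canonical inclusion $\mathfrak{k}_x\hookrightarrow T_{\one_x}G$ on vertical vectors and to $T_x\one$ on horizontal ones, hence is injective. By the immersion theorem a neighbourhood of the zero section is carried onto an embedded submanifold of $G$ containing $\one(M)$; since $\dim\mathfrak{k}_x=\dim\IG_x^\circ$, on each fibre $\exp$ restricts to the Lie group exponential of $\IG_x^\circ$ and so covers a neighbourhood of $\one_x$ in $\IG_x^\circ$, which makes this submanifold a neighbourhood of $\one(M)$ in $\IG^\circ$ --- a submanifold chart near the units. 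For a general $g\in\IG^\circ$ with $\alpha(g)=\beta(g)=x$, connectedness of the Lie group $\IG_x^\circ$ gives a path from $\one_x$ to $g$ inside $\IG_x^\circ$, which is the integral curve through $\one_x$ of a compactly supported time-dependent right-invariant vector field built from a curve in $\Gamma_c(\mathfrak{k})$; its time-one flow is a diffeomorphism of $G$ carrying $\IG^\circ$ onto itself and $\one_x$ onto $g$, so $\IG^\circ$ is embedded near $g$ as well. Once $\IG^\circ\subseteq G$ is embedded, the groupoid operations restrict to smooth maps and $\alpha|_{\IG^\circ}$ is a submersion, so $\IG^\circ\toto M$ is an embedded Lie subgroupoid; it is normal because $\IG$ is a normal subgroupoid and conjugations act on each $\IG_x$ by Lie group automorphisms, preserving $\IG_x^\circ$. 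Finally the chart near the units gives $T_{\one_x}(\IG^\circ\cap\alpha^{-1}(x))=\mathfrak{k}_x$, identifying $\Lf(\IG^\circ)$ with $\ker\rho=\mathcal{I}\Lf(\cG)$ as a Lie subalgebroid.

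I expect the genuine obstacle to be the third paragraph: verifying that $\exp$ is well defined independently of the extending section, that it is jointly smooth in the base point, and --- above all --- propagating the local submanifold structure at the units to \emph{all} of $\IG^\circ$. This last point is precisely where one must use the identity-component subgroupoid rather than the full isotropy $\IG$: the fibres $\IG_y$ can have a jumping number of connected components and need not assemble into a manifold at all (cf.\ \ref{exa:notLie}). The remaining ingredients --- the fibrewise constant rank of $\beta$ and the tangency of $\overrightarrow{\xi}$ to both fibrations --- are routine once set up.
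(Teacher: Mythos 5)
First, a remark on the comparison: the paper does not prove this lemma at all --- it is quoted verbatim from \cite[Proposition 2.5]{Moer} --- so your attempt is a reconstruction of the cited result. Your overall strategy (fibrewise constant-rank analysis of $(\alpha,\beta)$, then integrating the subbundle $\mathfrak{k}=\ker\rho$ by flows of compactly supported right-invariant vector fields, and transporting the structure from the unit section to a general point of $\IG^\circ$ by such flows) is the natural one, and most individual steps are sound: the rank count, the fact that $\overrightarrow{\xi}$ is tangent to each $\IG_y$ (this uses $T_g\IG_y=\ker T_g\alpha\cap\ker T_g\beta$, which your dimension count provides), well-definedness and smoothness of $\exp$, the flow-translation to an arbitrary $g\in\IG^\circ$, normality, and the identification of the algebroid once embeddedness is known.

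The genuine gap sits in the third paragraph, and it is not quite where you locate it: the propagation from the units to all of $\IG^\circ$ is in fact handled adequately by your flow argument; what is missing is the local statement at the units itself. You claim that, since $\exp$ restricted to each fibre covers a neighbourhood of $\one_x$ in $\IG_x^\circ$, the image of a neighbourhood of the zero section is ``a neighbourhood of $\one(M)$ in $\IG^\circ$''. Fibrewise surjectivity only controls the intrinsic topology of each isotropy group; embeddedness of $\IG^\circ$ in $G$ requires control in the ambient topology, namely that there is an open set $U\subseteq G$ around $\one_x$ with $U\cap\IG^\circ$ contained in the exponential image. Nothing in your argument excludes a sequence $g_k\in\IG_{y_k}^\circ$ with $y_k\to x$ and $g_k\to\one_x$ in $G$ while each $g_k$ is far from $\one_{y_k}$ inside its own identity component, hence not of the form $\exp(v_k)$ with $v_k$ small. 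In a constant-rank chart for $(\alpha,\beta)$, $\IG^\circ$ is a union of plaques, and the question is exactly whether plaques belonging to distant parts of nearby identity components can accumulate on the unit section; this is the same accumulation phenomenon that destroys embeddedness of the full isotropy $\IG$ in Example \ref{exa:notLie}, so it cannot be dismissed as routine --- it is the actual content of Moerdijk's proposition and must use regularity beyond the fibrewise data (for instance by complementing $\mathfrak{k}$ with a subbundle $C\subseteq\Lf(\cG)$ on which $\rho$ is injective, combining the two exponentials into a map $\mathfrak{k}\oplus C\to G$ of maximal rank along the zero section, and showing by a constant-rank argument that near the zero section its image meets $\IG$ only in $\exp(\mathfrak{k})$). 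The same uniformity issue undercuts your appeal to ``the immersion theorem'': an injective immersion of a neighbourhood of the non-compact zero section need not be a topological embedding, so even the claim that the image is an embedded submanifold of $G$ needs the locally uniform injectivity/openness across fibres that is missing here.
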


Let us remark here that even for regular Lie groupoids, the subgroupoid $\IG$ is in general not an embedded Lie subgroupoid, as the following example shows:

\begin{exa}\label{exa:joao}
  Let $\T := \R^2 / \Z^2$ be the $2$ dimensional torus. Consider the action of $(\R,+)$ on $\T \times ]-1,1[$ via
 $$\lambda ([x,y], \varepsilon) := ([x+\lambda, y+\lambda \varepsilon], \varepsilon), \qquad \lambda \in \R , [x,y] \in \T \text{ and } \varepsilon \in ]-1,1[.$$
 The associated action groupoid $A$ is regular as all orbits are diffeomorphic either to circles or lines. Furthermore, the isotropy at a given point $([x,y],\varepsilon) \in A$ is either a copy of $\Z$ in $\R \times \{([x,y],\varepsilon)\}$ if $\varepsilon$ is rational or a singleton for $\varepsilon$ irrational.
 As all the points in the same orbit have isotropy of the same type, this implies that the isotropy subgroupoid would have to be at least a one dimensional submanifold if it were an embedded submanifold. 
 However, this implies that $IA$ can not be an embedded Lie groupoid as for example there is no neighborhood of the point $(1,[x,y],0) \in IA$ which is diffeomorphic to a non-trivial euclidean space (due to the trivial isotropy groups of the points $([x,y],\varepsilon)$ for $\varepsilon \in \R\setminus \Q$.  
\end{exa}

Thus we can not leverage in the following constructions a smooth structure on the isotropy groupoid. Note that by restricting ourselves to the smaller class of locally trivial Lie groupoids, such a structure would be available, as then $\IG$ is indeed an embedded submanifold, \cite[Proposition 1.17]{MAC87}. 
Instead we will now describe a construction of a Lie group structure on $\vBis (\cG)$ which works for every regular Lie groupoid. To this end we leverage that $\IG^\circ$ is an embedded submanifold and consider an auxiliary group 
$$\vBis^\circ (\cG) := \{\sigma \in \Bis (\cG) \mid \sigma (M) \subseteq \IG^\circ\}$$
Since $\IG^\circ$ is an embedded Lie subgroupoid, we can test smoothness of a bisection taking its values in $\IG^\circ$ with respect to the submanifold structure \ref{into-sub}. Hence, we obtain the following. 

\begin{prop}\label{prop: vBiscirc}
 Let $\cG$ be a regular Lie groupoid. Then $\vBis^\circ(\cG) \subseteq C^\infty (M,G)$ is a submanifold and this structure turns $\vBis^\circ(\cG)$ into an infinite-dimensional Lie group which is isomorphic to $\Bis (\IG^\circ)$. Moreover, this Lie group satisfies the following:
 \begin{enumerate}
  \item The Lie algebra $\Lf (\vBis^\circ (\cG))$ is isomorphic to the Lie algebra $\Gamma(\mathcal{I}\Lf (\cG))$ of smooth sections  of the isotropy algebroid with the negative of the usual bracket.
  \item The inclusion $\iota_{\Bis} \colon \vBis^\circ (\cG) \rightarrow \Bis (\cG)$ turns $\vBis (\cG)$ into an initial Lie subgroup of $\Bis (\cG)$.
 \end{enumerate}
\end{prop}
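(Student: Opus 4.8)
The plan is to realise $\vBis^\circ(\cG)$ as the bisection group of the finite-dimensional Lie groupoid $\IG^\circ$ and to transport all the relevant structure along the embedding $\IG^\circ \hookrightarrow \cG$. Write $\IG^\circ = (I^\circ \toto M)$; by Lemma \ref{la:Moerdijk} this is an embedded Lie subgroupoid of $\cG$, in particular a finite-dimensional Lie groupoid, and regularity enters exactly through this lemma (and through making $\mathcal{I}\Lf(\cG) = \ker\rho$ an honest subbundle of $\Lf(\cG)$). First I would check the set- and group-theoretic identification $\vBis^\circ(\cG) = \Bis(\IG^\circ)$: if $\sigma \in \Bis(\cG)$ takes its values in $I^\circ$, then $\beta\circ\sigma = \alpha\circ\sigma = \id_M$ because $\alpha$ and $\beta$ coincide on $\IG^\circ$, so $\sigma$ is automatically a vertical bisection; conversely, composing a bisection of $\IG^\circ$ with the inclusion $I^\circ \hookrightarrow G$ yields a smooth section of $\alpha$ with $\beta\circ\sigma = \id_M \in \Diff(M)$, hence an element of $\vBis^\circ(\cG)$. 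The group operations match: $\beta\circ\tau = \id_M$ collapses $\sigma\star\tau(x) = \sigma(\beta(\tau(x)))\,\tau(x)$ to the pointwise product $\sigma(x)\tau(x)$, which is exactly the multiplication of $\Bis(\IG^\circ)$, and likewise for inversion.

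For the smooth structure the essential input is that $I^\circ$ is an \emph{embedded} submanifold of $G$, so that $C^\infty(M, I^\circ)$ is a submanifold of $C^\infty(M, G)$; this is the content of \ref{into-sub} (in the non-compact case one uses the fine very strong topology throughout, which the cited result accommodates). Applying Proposition \ref{prop:bisLie} to $\IG^\circ$ exhibits $\Bis(\IG^\circ)$ as a submanifold of $C^\infty(M, I^\circ)$ carrying an infinite-dimensional Lie group structure; composing the two submanifold charts presents $\vBis^\circ(\cG) = \Bis(\IG^\circ)$ as a submanifold of $C^\infty(M, G)$. Since $\Bis(\cG)$ is itself a submanifold of $C^\infty(M, G)$ containing it, $\vBis^\circ(\cG)$ is a submanifold of $\Bis(\cG)$, and as the operations of $\Bis(\cG)$ restrict to the pointwise operations just described, which are smooth for the $\Bis(\IG^\circ)$-structure, $\vBis^\circ(\cG)$ is an infinite-dimensional Lie group, isomorphic to $\Bis(\IG^\circ)$ by construction. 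One subsidiary point to verify here is that the submanifold structure on $\vBis^\circ(\cG)$ inherited from $C^\infty(M,G)$ coincides with the intrinsic one of $\Bis(\IG^\circ)$; this again reduces to the embeddedness of $\IG^\circ$ and the compatibility statement in \ref{into-sub}.

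Assertion (1) is then immediate from Proposition \ref{prop: LA} applied to $\IG^\circ$: it identifies $\Lf(\vBis^\circ(\cG)) = \Lf(\Bis(\IG^\circ))$ with the Lie algebra of smooth sections of $\Lf(\IG^\circ)$ equipped with the negative of the usual bracket, and by Lemma \ref{la:Moerdijk} the algebroid $\Lf(\IG^\circ)$ is the isotropy subalgebroid $\mathcal{I}\Lf(\cG)$.

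For assertion (2) I would use that being an initial submanifold is transitive along the chain $\vBis^\circ(\cG) = \Bis(\IG^\circ) \hookrightarrow C^\infty(M, I^\circ) \hookrightarrow C^\infty(M, G)$: $\Bis(\IG^\circ)$ is a submanifold of $C^\infty(M,I^\circ)$ by Proposition \ref{prop:bisLie}, hence initial, and $C^\infty(M,I^\circ)$ is a submanifold of $C^\infty(M,G)$ by \ref{into-sub}, hence initial, so $\vBis^\circ(\cG)$ is an initial submanifold of $C^\infty(M,G)$; precomposing test maps into $\Bis(\cG)$ with the smooth inclusion $\Bis(\cG) \hookrightarrow C^\infty(M,G)$ then shows that $\vBis^\circ(\cG)$ is initial in $\Bis(\cG)$. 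The inclusion is moreover an injective morphism of Lie groups whose differential at the unit is, under the identifications above, the inclusion $\Gamma_c(\mathcal{I}\Lf(\cG)) \hookrightarrow \Gamma_c(\Lf(\cG))$; since $\ker\rho$ is a subbundle it admits a complementary subbundle, so this is a closed complemented embedding, and by translation the inclusion is an immersion everywhere. The only substantial input beyond Propositions \ref{prop:bisLie} and \ref{prop: LA} is the mapping-space statement \ref{into-sub} — that an embedded submanifold $N \subseteq P$ induces a submanifold $C^\infty(M, N) \subseteq C^\infty(M, P)$, uniformly in the compact and the fine very strong settings — and this, together with Lemma \ref{la:Moerdijk}, is where I expect the real work to lie.
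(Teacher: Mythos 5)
Your proposal is correct and follows essentially the same route as the paper: identify $\vBis^\circ(\cG)$ with $\Bis(\IG^\circ)$ via the pushforward of the embedding $\IG^\circ\hookrightarrow\cG$ (Lemma \ref{la:Moerdijk}), apply Proposition \ref{prop: LA} to $\IG^\circ$ for the Lie algebra, and deduce initiality from both groups being submanifolds of the common ambient manifold $C^\infty(M,G)$. The one inaccuracy is a citation: the key fact that embeddedness of $\IG^\circ$ in $G$ makes $C^\infty(M,\IG^\circ)$ a (split) submanifold of $C^\infty(M,G)$ is not the content of \ref{into-sub} (which only concerns corestrictions of test maps into a given submanifold) but of the mapping-space result \cite[Proposition 10.8]{Mic}, which is what the paper invokes at this step.
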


\begin{proof}
 Due to Lemma \ref{la:Moerdijk}, we can consider the embedded subgroupoid $\IG^\circ \subseteq \cG$. Denoting by $I \colon \IG^\circ \rightarrow \cG$ the associated embedding, the mapping 
 $I_* \colon C^\infty (M,\IG^\circ) \rightarrow C^\infty (M,G), f\mapsto I\circ f$ is a smooth embedding, realising $C^\infty (M, \IG^\circ)$ as a split submanifold of $C^\infty (M,G)$, cf.\ \cite[Proposition 10.8]{Mic}. 
 We recall from \ref{numba: bis} that $\Bis (\IG^\circ)$ is a submanifold of $C^\infty (M,\IG^\circ)$ and this structure turns it into a Lie group. Now the canonical identification $I_* (\Bis (\IG^\circ)) = \vBis^\circ (\cG)$ shows that the Lie group $\vBis^\circ (\cG) \cong \Bis (\IG^\circ)$ can be identified as a submanifold of $C^\infty (M,G)$ (cf.\ \cite[Lemma 1.4]{SUB}).
 We establish now the properties claimed in the proposition:
 \begin{enumerate}
  \item  The isomorphism $\Bis (\IG^\circ) \cong \vBis^\circ (\cG)$, identifies the Lie algebra of $\vBis^\circ (\cG)$ with $\Lf (\Bis (\IG^\circ)$. Since the isotropy algebroid is the Lie algebroid of $\IG^\circ$, Proposition \ref{prop: LA} shows that we obtain the Lie algebra claimed in the statement of the proposition.
  \item To see that $\vBis^\circ (\cG)$ is an initial subgroup of $\Bis (\cG)$, note first that $\vBis^\circ (\cG)$ and $\Bis (\cG)$ are both submanifolds of $C^\infty (M,G)$. Thus a mapping $f \colon M \rightarrow C^\infty (M,G)$ from a $C^k$-manifold which takes its image in $H \in \{\vBis^\circ (\cG), \Bis (\cG)\}$ is of class $C^k$ if and only if it is a $C^k$-mapping into $H$. Now the inclusion $\iota_{\Bis}$ is an injective group morphism. Composing $\iota_{\Bis}$ with the inclusion $\Bis (\cG) \subseteq C^\infty(M,G)$, we obtain the (smooth) inclusion $\vBis^{\circ}(\cG) \subseteq C^\infty (M,G)$, whence $\iota_{\Bis}$ is an injective Lie group morphism. It is easy to see that $\Lf (\iota_{\Bis}) \colon \Lf (\vBis^\circ (\cG)) \rightarrow \Lf (\Bis (\cG))$ is injective, as it is, up to an identification, just the inclusion of subspaces.
 Hence we consider a $C^k$-map $f \colon N \rightarrow \Bis (\cG)$ taking its image in $\vBis^\circ (\cG)$. Then $\iota_{\Bis}^{-1} \circ f$ is a $C^k$-map into $\vBis^\circ (\cG)$, as we can identify it with the $C^k$-map $f\colon M \rightarrow \Bis (\cG) \subseteq C^\infty (M,G)$. 
 \end{enumerate}
\end{proof}

Note that the above proof gives no information about $\vBis^\circ (\cG)$ being a Lie subgroup of $\Bis (\cG)$ in the traditional sense (i.e.\ being an embedded submanifold). However, we will now leverage the structure on  $\vBis^\circ (\cG)$ to construct a Lie group structure on $\vBis (\cG)$ via the construction principle Proposition \ref{prop:Bourbaki}.

\begin{thm}\label{thm: vertBis}
 Let $\cG$ be a regular Lie groupoid, then the group of vertical bisections $\vBis (\cG)$ is an infinite-dimensional Lie group with $\Lf (\vBis (\cG)) = \Lf (\vBis^{\circ} (\cG))$. 
 With respect to this structure, the vertical bisections form an initial Lie subgroup of $\Bis (\cG)$. Moreover, $\vBis^{\circ} (\cG)$ becomes an open subgroup of $\vBis (\cG)$.
\end{thm}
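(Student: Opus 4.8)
The plan is to invoke the construction principle Proposition~\ref{prop:Bourbaki} (the Bourbaki-style criterion for building a Lie group from an open subgroup together with the ambient group-theoretic structure). Concretely, I want to exhibit $\vBis^\circ(\cG)$ as an open subgroup of $\vBis(\cG)$ whose manifold structure is compatible with conjugation by arbitrary elements of $\vBis(\cG)$, so that the whole group inherits a Lie group structure. To set this up I first observe that $\vBis^\circ(\cG)$ is a normal subgroup of $\vBis(\cG)$: this is because $\IG^\circ$ is a \emph{normal} embedded Lie subgroupoid of $\cG$ by Lemma~\ref{la:Moerdijk}, so conjugation of a section with values in $\IG^\circ$ by any bisection again has values in $\IG^\circ$ (connectedness of fibres is preserved since the conjugating automorphism $I_\sigma$ restricts to a diffeomorphism of each isotropy group). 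Thus the abstract quotient $\vBis(\cG)/\vBis^\circ(\cG)$ makes sense, and the cosets will be the connected-component-like pieces that $\vBis^\circ(\cG)$ is declared to be the identity component of.

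The key step is then to verify the hypotheses of Proposition~\ref{prop:Bourbaki}: for each $\sigma \in \vBis(\cG)$, the conjugation map $c_\sigma \colon \vBis^\circ(\cG) \to \vBis^\circ(\cG)$, $\tau \mapsto \sigma \star \tau \star \sigma^{-1}$, must be smooth with respect to the Lie group structure on $\vBis^\circ(\cG)$ coming from Proposition~\ref{prop: vBiscirc}. Using the isomorphism $\vBis^\circ(\cG) \cong \Bis(\IG^\circ)$, this amounts to showing that the inner automorphism $I_\sigma$ of $\cG$ restricts to a smooth automorphism of the embedded Lie subgroupoid $\IG^\circ$, and then that the induced push-forward on $\Bis(\IG^\circ)$ is smooth; the latter follows from functoriality of the bisection construction (smooth groupoid morphisms induce smooth maps on bisection groups, cf.\ the manifold-of-mappings machinery already used, e.g.\ \cite{Mic}) once one knows $\ev$ and the natural action are smooth (Lemmas~\ref{lem: ev:subm} and~\ref{la: natact}). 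One also needs that conjugation $\vBis^\circ(\cG)\times\vBis^\circ(\cG)\to\vBis^\circ(\cG)$ itself is smooth, but this is immediate since $\vBis^\circ(\cG)$ is already a Lie group. Granting this, Proposition~\ref{prop:Bourbaki} produces a unique Lie group structure on $\vBis(\cG)$ making $\vBis^\circ(\cG)$ an open submanifold; in particular $\Lf(\vBis(\cG)) = \Lf(\vBis^\circ(\cG))$, and the ``moreover'' statement that $\vBis^\circ(\cG)$ is open in $\vBis(\cG)$ is built into the conclusion.

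It remains to upgrade the ``initial Lie subgroup'' property from $\vBis^\circ(\cG)$ to $\vBis(\cG)$. Since $\vBis^\circ(\cG)$ is open in $\vBis(\cG)$, the inclusion $\vBis^\circ(\cG) \hookrightarrow \vBis(\cG)$ is an open embedding, hence a local diffeomorphism; composing with $\iota_{\Bis}\colon \vBis^\circ(\cG)\to\Bis(\cG)$ (which is an initial Lie group morphism by Proposition~\ref{prop: vBiscirc}(2)) shows that $\vBis(\cG)\to\Bis(\cG)$ is an injective Lie group morphism. To check initiality, take a $C^k$-map $f\colon N \to \Bis(\cG)$ with image in $\vBis(\cG)$. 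Locally around any point of $N$, after possibly restricting, one can arrange the image to lie in a single coset $\sigma\star\vBis^\circ(\cG)$; left-translating by $\sigma^{-1}$ inside $\Bis(\cG)$ (smooth, since $\Bis(\cG)$ is a Lie group and $\vBis(\cG)$ is a subgroup) reduces to a $C^k$-map into $\vBis^\circ(\cG)$, which by initiality of $\iota_{\Bis}$ is $C^k$ into $\vBis^\circ(\cG)$, hence into $\vBis(\cG)$; translating back preserves $C^k$-ness since left translation is a diffeomorphism of $\vBis(\cG)$.

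The main obstacle I expect is the smoothness of the conjugation action $c_\sigma$ on $\vBis^\circ(\cG)$ — more precisely, verifying that an inner automorphism $I_\sigma$ of $\cG$ genuinely restricts to a \emph{smooth} automorphism of the embedded Lie subgroupoid $\IG^\circ$ and that the resulting push-forward on the manifold of mappings $C^\infty(M,\IG^\circ)$ (and hence on $\Bis(\IG^\circ)$) is smooth in the fine very strong topology when $M$ is non-compact. One has to be careful that $I_\sigma$ maps the \emph{connected} identity subgroupoid into itself (using that $I_\sigma$ is fibrewise a group isomorphism of the isotropy groups, hence preserves identity components) and that the smoothness arguments of Section~\ref{sec:prelim}, in particular Lemma~\ref{la: natact}, can be transported to $\IG^\circ$ in place of $\cG$. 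Everything else is a fairly mechanical application of the cited construction principle and the already-established properties of $\vBis^\circ(\cG)$.
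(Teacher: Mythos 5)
Your overall skeleton is the one the paper uses: build the Lie group via Proposition~\ref{prop:Bourbaki} with $U=V=W=\vBis^\circ(\cG)$, get normality of $\vBis^\circ(\cG)$ in $\vBis(\cG)$ from normality of $\IG^\circ$ (Lemma~\ref{la:Moerdijk}) via the pointwise conjugation formula, read off $\Lf(\vBis(\cG))=\Lf(\vBis^\circ(\cG))$ from openness, and prove initiality of $\vBis(\cG)$ in $\Bis(\cG)$ by localising to a coset, translating by $g^{-1}$ in $\Bis(\cG)$, invoking initiality of $\iota_{\Bis}$ from Proposition~\ref{prop: vBiscirc}, and translating back in $\vBis(\cG)$ --- this last argument is essentially verbatim the paper's.

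Where you genuinely diverge is the one hypothesis of Proposition~\ref{prop:Bourbaki}~(b) that needs work, the smoothness of $c_\sigma$ on $\vBis^\circ(\cG)$, and here you flag an ``obstacle'' that in fact dissolves with a tool you already use elsewhere. The paper's argument is one line: conjugation $C_\sigma$ by $\sigma\in\vBis(\cG)\subseteq\Bis(\cG)$ is smooth on the Lie group $\Bis(\cG)$, it maps $\vBis^\circ(\cG)$ into itself by normality, and since $\iota_{\Bis}$ realises $\vBis^\circ(\cG)$ as an \emph{initial} Lie subgroup of $\Bis(\cG)$ (Proposition~\ref{prop: vBiscirc}~(2)), $c_\sigma=\iota_{\Bis}^{-1}\circ C_\sigma\circ\iota_{\Bis}$ is smooth --- no discussion of $I_\sigma$, of the fine very strong topology, or of functoriality of $\Bis$ is needed. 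Your alternative route (show $I_\sigma$ restricts to a smooth automorphism of the embedded subgroupoid $\IG^\circ$ and push forward on $\Bis(\IG^\circ)$) can be completed: $I_\sigma$ is smooth on $G$, for vertical $\sigma$ it preserves the $\alpha$- and $\beta$-fibres and hence the fibrewise identity components, so it maps $\IG^\circ$ to itself, and the corestriction is smooth because $\IG^\circ$ is embedded (\ref{into-sub}); but the smoothness of the induced map on the mapping manifold rests on smoothness of pushforward by a fixed smooth map in the fine very strong topology (\cite{Mic}), not on Lemmas~\ref{lem: ev:subm} and~\ref{la: natact}, which are not the relevant ingredients here. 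So: same architecture as the paper, a more laborious (though repairable) treatment of the crucial smoothness step, and the step you single out as the main difficulty is exactly the one the paper closes immediately via initiality of $\vBis^\circ(\cG)$ in $\Bis(\cG)$.
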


\begin{proof}
 Apply the construction principle Proposition \ref{prop:Bourbaki}: Set $U=V=\vBis^\circ(\cG)$ and observe that part (a) just yields the Lie group structure on $\vBis^\circ (\cG)$ from Proposition \ref{prop: vBiscirc}. We wish now to apply part (b) of Proposition \ref{prop:Bourbaki} to obtain a Lie group structure on $\vBis (\cG)$. 
 
 To this end, we observe that $\IG^\circ$ forms a normal Lie subgroupoid of $\cG$, \ref{la:Moerdijk}. Let now $\sigma, \tau \in \vBis (\cG)$, then one directly verifies from the formula for multiplication and inversion in the bisection group that 
 $$c_\tau (\sigma) (x) := \tau \star \sigma \star \tau^{-1} (x) = \tau (x) \sigma (x) (\tau (x))^{-1},$$
 where $(\tau (x))^{-1}$ denotes the inverse of $\tau(x)$ in the Lie group $\alpha^{-1}(x) \cap \beta^{-1}(x)$.
 If $\sigma \in \vBis^\circ (\cG)$, we deduce that $c_\tau (\sigma)(x)$ stays in the normal subgroup $\alpha^{-1}(x) \cap \beta^{-1}(x) \cap \IG^\circ$, whence $\vBis^\circ (\cG)$ is a normal subgroup of $\vBis (\cG)$.
 Hence we set $W:=\vBis^\circ(\cG)$ and have to prove that $c_\tau \colon W \rightarrow W$ is smooth for every $\tau \in \vBis (\cG)$. 
 As $\Bis (\cG)$ is a Lie group, $C_\tau \colon \Bis (\cG) \rightarrow \Bis (\cG), \delta \mapsto \tau \star \delta \star \tau^{-1}$ is smooth. In Proposition \ref{prop: vBiscirc} we have seen that the inclusion $\iota_{\Bis}\colon \vBis^\circ (\cG) \rightarrow \Bis (\cG)$ turns $\vBis^\circ (\cG)$ into an initial Lie subgroup of $\Bis (\cG)$. Combining these observations we conclude from Proposition \ref{prop:Bourbaki} (b) that $\vBis (\cG)$ is a Lie group as $c_\tau = \iota_{\Bis}^{-1}\circ C_\tau \circ \iota_{\Bis}$ is smooth for every $\tau \in \vBis (\cG)$.
 
 Since $\vBis^\circ(\cG)$ is an open Lie subgroup of $\vBis (\cG)$ it is clear that the Lie algebras of both groups coincide. To see that $\vBis (\cG)$ is an initial Lie subgroup of $\Bis (\cG)$ we observe that the inclusion $I_{\Bis} \colon \vBis (\cG) \rightarrow \Bis (\cG)$ is an injective morphism of Lie groups with $\Lf (I_{\Bis})$ injective since $\iota_{\Bis}$ is such a morphism. Let now $f\colon N \rightarrow \Bis (\cG)$ be a $C^k$-map with image in $\vBis (\cG)$. It suffices to check the $C^k$-property on every (open) connected component of $\vBis (\cG)$. Without loss of generality, we may thus assume that $f$ takes its image in a component $C \subseteq \vBis (\cG)$ such that for some $g\in \vBis (\cG)$ we have $g^{-1} \star C \subseteq \vBis^\circ (\cG)$. Denoting left translation by an element $\ell$ of a Lie group $L$ by $\lambda_\ell$, we see that $$I_{\Bis}^{-1} \circ f = \lambda_{g}^{\vBis (\cG)} \circ\iota_{\text{Bis}}^{-1} \circ \lambda_{g^{-1}}^{\Bis}f,$$
 whence $I_{\Bis}^{-1} \circ f$ is a $C^k$-map. 
\end{proof}

We are now in a position to establish regularity (in the sense of Milnor) for the group of vertical bisections. Recall that a Lie group $H$ is $C^r$-regular if for every $C^r$-curve $\gamma \colon [0,1] \rightarrow \Lf (H))$ the initial value problem 
  \begin{equation}\begin{cases}
     \eta'(t) =  T_{\one} \rho_{\eta (t)} (\gamma(t)) \qquad \rho_{g}(h):= h g\\
     \eta (0) = \one
    \end{cases}
 \end{equation}
 has a unique $C^{r+1}$-solution $\Evol (\gamma) := \eta \colon [0,1] \rightarrow H$ and the evolution map \\
 $\evol \colon C^r ([0,1],\Lf (H)) ) \rightarrow H, \gamma \mapsto \Evol (\gamma)(1)$ is smooth. To employ advanced techniques in infinite-dimensional Lie theory, one needs to require regularity of the Lie groups involved, cf.\ \cite{hg2015}.

\begin{prop}\label{prop:reg}
The Lie group $\vBis (\cG )$ is $C^r$-regular for every $r \in \N_0\cup \{\infty\}$, whenever $M$ is compact or $\cG$ is a transitive Lie groupoid. 
\end{prop}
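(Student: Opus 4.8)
The plan is to deduce $C^r$-regularity of $\vBis(\cG)$ from known regularity results for related, better-understood Lie groups, using the structural results established so far. Recall from Theorem \ref{thm: vertBis} that $\vBis^\circ(\cG)$ is an \emph{open} subgroup of $\vBis(\cG)$, and from Proposition \ref{prop: vBiscirc} that $\vBis^\circ(\cG) \cong \Bis(\IG^\circ)$ as Lie groups. Since $C^r$-regularity only depends on a neighbourhood of the identity together with the group operations, a Lie group is $C^r$-regular if and only if some (equivalently, any) open subgroup is. Hence it suffices to prove that $\Bis(\IG^\circ)$ is $C^r$-regular. Here $\IG^\circ$ is, by Lemma \ref{la:Moerdijk}, an embedded Lie subgroupoid of $\cG$ with the \emph{same} base $M$, so the problem reduces to: bisection groups of (certain) Lie groupoids over compact, resp.\ arbitrary transitive, bases are $C^r$-regular.

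First I would record the reduction precisely: $\vBis(\cG)$ is $C^r$-regular $\iff$ $\vBis^\circ(\cG)$ is $C^r$-regular $\iff$ $\Bis(\IG^\circ)$ is $C^r$-regular, citing the standard fact (e.g.\ from \cite{hg2015}) that regularity passes to and from open subgroups, and citing the isomorphism of Proposition \ref{prop: vBiscirc}. Then I would split into the two cases. If $M$ is compact, $\IG^\circ$ is a Lie groupoid over a compact base, and the $C^r$-regularity of $\Bis(\cH)$ for any Lie groupoid $\cH$ over a compact base is exactly \cite[Theorem 5.4 / Corollary]{SaW} (bisection groups over compact base are $C^\infty$-regular, hence $C^r$-regular for all $r$); one applies this with $\cH = \IG^\circ$. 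If $\cG$ is transitive, then $\cG$ is the gauge groupoid of a principal bundle $P \to M$, and its isotropy subgroupoid $\IG$ is already an embedded Lie group bundle (the adjoint bundle), with $\IG^\circ$ its fibrewise identity component; under the identification of the introduction (diagram \eqref{eq: gauge}), $\vBis^\circ(\cG)$ corresponds to (the identity component of) the gauge group $\operatorname{Gau}(P)$. The $C^r$-regularity of gauge groups — for $M$ not necessarily compact — is known: it follows from the results of \cite{Woc} together with the general fact that groups of sections of Lie group bundles (equivalently, groups of the form $C^\infty_c(M,K)$-type twisted by a bundle) are regular, cf.\ also the treatment via \cite{Schue,MR1040392}. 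So in this case one invokes that citation directly.

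The key steps, in order: (1) state and justify the open-subgroup reduction to $\Bis(\IG^\circ)$; (2) in the compact case, apply the regularity theorem for bisection groups over a compact base from \cite{SaW} to $\IG^\circ$; (3) in the transitive case, identify $\vBis^\circ(\cG)$ with the (identity component of the) gauge group via diagram \eqref{eq: gauge}, and invoke $C^r$-regularity of gauge groups from \cite{Woc} (resp.\ \cite{Schue}); (4) conclude in all cases that $\vBis(\cG)$, being covered by the open regular subgroup $\vBis^\circ(\cG)$, is itself $C^r$-regular. One should also remark that the group operations of $\vBis(\cG)$ near the identity are literally those of $\vBis^\circ(\cG)$, so no compatibility issue arises.

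The main obstacle is the transitive, non-compact case: over a non-compact base one must be careful that the relevant function-space topologies (the fine very strong topology of \cite{Mic,HS16}) are exactly the ones for which the cited gauge-group regularity results are proven, and that the isotropy subgroupoid $\IG^\circ$ of a gauge groupoid is genuinely the identity-component subbundle of the adjoint bundle so that $\Bis(\IG^\circ)$ really is the identity component of $\operatorname{Gau}(P)$. If a direct citation for $C^r$-regularity of gauge groups over non-compact base in the Bastiani setting is not available in precisely the form needed, the fallback is to prove it by hand: solve the defining ODE fibrewise/locally using regularity of the structure group $H$ and of $C^r([0,1],\cdot)$-spaces, patch using a locally finite trivialising cover, and check smooth dependence on $\gamma$ using the exponential law and the smoothness of composition — but this is routine given the constant-rank/local-triviality structure, so I would only sketch it if the citation does not suffice.
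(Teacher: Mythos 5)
Your overall strategy matches the paper's: reduce via the open subgroup $\vBis^\circ(\cG)\cong\Bis(\IG^\circ)$ (Theorem \ref{thm: vertBis}, Proposition \ref{prop: vBiscirc}) and, for compact $M$, quote the $C^r$-regularity of bisection groups over a compact base from \cite{SaW}; that half is exactly the paper's Case 1. The gap is in the transitive case, which is only of interest when $M$ is non-compact (otherwise Case 1 already applies), and that is precisely where your citations do not carry the load. \cite{Woc} treats principal bundles over \emph{compact} base, and it is not clear that \cite{Schue} or \cite{MR1040392} (the latter working in the convenient setting) establish $C^r$-regularity for all finite $r$ in the Bastiani sense over a non-compact base at all --- they provide Lie group structures, not the quantitative regularity statement needed here. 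The paper instead identifies the vertical bisections of the gauge groupoid with the compactly supported gauge group $\mathrm{Gau}_c(P)$ and invokes \cite[Theorem A and Corollary 8.3]{MEAS}, which is a result tailored to give $C^r$-regularity for every $r\in\N_0\cup\{\infty\}$ in exactly this non-compact situation; without that (or an equivalent) input, your step (3) is unsupported.

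Your proposed fallback --- ``solve the ODE fibrewise, patch over a locally finite cover, check smooth dependence; routine'' --- is not a repair but a restatement of the open problem: smooth (indeed $C^0$) dependence of the evolution on the curve must be verified in the fine very strong topology on mapping spaces over a non-compact base, and the paper explicitly remarks (immediately after Proposition \ref{prop:reg}) that such ``extensive localisation arguments'' are exactly what it declines to carry out, which is why the proposition is not claimed for general non-compact base. A secondary inaccuracy: over non-compact $M$, $\Bis(\IG^\circ)$ (sections into the fibrewise identity components of the adjoint bundle) is generally strictly larger than the identity component of the gauge group in this topology (whose elements are compactly supported), so the phrase ``identity component of $\operatorname{Gau}(P)$'' should be replaced by an identification of $\Bis(\IG^\circ)$ as an open subgroup of the gauge group with its function-space Lie group structure; this is fixable, but it again presupposes a regularity result for gauge groups over non-compact base of the kind only \cite{MEAS} supplies.
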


\begin{proof}
 We distinguish two cases:\\
 \textbf{Case 1: $M$ is compact.} Since $\vBis^\circ (\cG) \cong \Bis (\IG^\circ)$ is an open subgroup of $\vBis(\cG)$, we see that $\vBis (\cG)$ is $C^r$-regular, if and only if $\Bis (\IG^\circ)$ is $C^r$-regular. However, the $C^r$-regularity of $\Bis (\IG^\circ)$ was established in \cite[Theorem 5.5]{SaW}.\\
 \textbf{Case 2: $\cG$ is transitive.} In this case $\cG$ can be identified as a gauge groupoid of a principal $H$-bundle $P \rightarrow M$. As explained in the introduction \eqref{eq: gauge}, we can identify the compactly supported vertical bisections $\vBis (\cG)$ with the group of compactly supported gauge transformations $\text{Gau}_c (P)$. However, $\text{Gau}_c (P)$ (and thus also $\vBis (\cG)$ is $C^r$-regular for every $r\in \N_0 \cup\{\infty\}$ by a combination of \cite[Theorem A and Corollary 8.3]{MEAS}. 
\end{proof}

We expect Proposition \ref{prop:reg} to hold for all vertical bisection groups, as all bisection groups of finite-dimensional Lie groupoids are expected to be regular. For groupoids over a non-compact base, these results require mild generalisations of the results obtained in \cite{SaW,SaW2}. The main issue is that the function space topologies are much more involved in this case, see \cite{HS16,Mic}. Working around this would require extensive localisation arguments (on a cover of compact sets) which poses no conceptual problem, but would lead quite far away from the main line of reasoning. Thus we have not established the result in full generality.

Note that from the construction it is not clear whether $\vBis (\cG)$ is a Lie subgroup of $\Bis (\cG)$ and not even if $\vBis (\cG)$ is a submanifold of $C^\infty (M,G)$. Up to this point we can only obtain the following:

\begin{la}
 The Lie group topology of $\vBis (\cG)$ from Theorem \ref{thm: vertBis} is the subspace topology induced by $C^\infty (M,G)$. Moreover, a mapping $f\colon M \rightarrow C^\infty (M,G)$ whose image is contained in $\vBis (\cG)$ is of class $C^k$ if and only if it is $C^k$ as a mapping into $\vBis (\cG)$.
  \end{la}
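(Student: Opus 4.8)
The plan is to transfer the already-established topological and smooth-testing properties of the open subgroup $\vBis^\circ(\cG)$ to the full group $\vBis(\cG)$ by a translation argument, using that $\vBis^\circ(\cG)$ is open in $\vBis(\cG)$ and that the group operations of $\Bis(\cG)$ on $C^\infty(M,G)$ are continuous and smooth. First I would record the two facts we may quote: by Proposition~\ref{prop: vBiscirc} (and the identification $\vBis^\circ(\cG)\cong\Bis(\IG^\circ)$ together with \cite[Proposition 10.8]{Mic}), the Lie group topology on $\vBis^\circ(\cG)$ is exactly the subspace topology induced from $C^\infty(M,G)$, and a map $f\colon M\to C^\infty(M,G)$ with image in $\vBis^\circ(\cG)$ is $C^k$ iff it is $C^k$ into $\vBis^\circ(\cG)$ (this last is the content of ``testing smoothness against the split submanifold structure'', \ref{into-sub}).

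Next I would handle a general component. By Theorem~\ref{thm: vertBis}, $\vBis^\circ(\cG)$ is an open subgroup of $\vBis(\cG)$, so every connected component $C$ of $\vBis(\cG)$ is of the form $g\star\vBis^\circ(\cG)$ for some $g\in\vBis(\cG)$, and is open in $\vBis(\cG)$. For the topology claim: left translation $\lambda_g^{\Bis}\colon\Bis(\cG)\to\Bis(\cG)$ is a homeomorphism which restricts to a homeomorphism of $C^\infty(M,G)$ onto itself in the relevant charts (the group operations of $\Bis(\cG)$ extend to smooth maps on $C^\infty(M,G)$ in the sense used throughout, cf.\ the proof of Proposition~\ref{prop:bisLie}), and it carries $\vBis^\circ(\cG)$ homeomorphically onto $C$ both in the $\vBis(\cG)$-topology and in the subspace topology from $C^\infty(M,G)$. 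Since the two topologies agree on $\vBis^\circ(\cG)$, they agree on each $C$, hence on $\vBis(\cG)$; one checks that a subset is open in $\vBis(\cG)$ iff its intersection with each component is, and likewise for the subspace topology, so the identification is complete.

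For the smooth-testing claim I would argue componentwise in exactly the same style as the end of the proof of Theorem~\ref{thm: vertBis}. Given $f\colon M\to C^\infty(M,G)$ with image in $\vBis(\cG)$, it suffices (both notions of $C^k$ being local) to treat the case where $f$ has image in a single component $C=g\star\vBis^\circ(\cG)$. Write $f = \lambda_g^{\vBis(\cG)}\circ(\lambda_{g^{-1}}^{\Bis}\circ f)$ on the $\vBis(\cG)$ side, noting that $\lambda_{g^{-1}}^{\Bis}\circ f$ has image in $\vBis^\circ(\cG)$; since $\lambda_{g^{-1}}$ is realised by a smooth self-map of $C^\infty(M,G)$ (it is $\delta\mapsto g^{-1}\star\delta$, smooth by the Lie group structure of $\Bis(\cG)$), the map $\lambda_{g^{-1}}^{\Bis}\circ f$ is $C^k$ into $C^\infty(M,G)$ iff $f$ is, and then the already-proven statement for $\vBis^\circ(\cG)$ upgrades this to $C^k$ into $\vBis^\circ(\cG)$; finally $\lambda_g^{\vBis(\cG)}$ is smooth on the Lie group $\vBis(\cG)$, so composing gives the result, and the reverse implication is immediate since the inclusion $\vBis(\cG)\hookrightarrow C^\infty(M,G)$ is smooth.

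The main obstacle I expect is purely bookkeeping rather than conceptual: one must be careful that ``the group operations of $\Bis(\cG)$ are smooth/continuous'' is used only in the precise form available, namely as self-maps of the ambient $C^\infty(M,G)$ compatible with the submanifold $\Bis(\cG)$, so that translations genuinely relate the \emph{subspace} topology and smooth structure and not merely the intrinsic Lie group ones. Once the translation invariance of the ambient structure is correctly invoked, both assertions reduce to the component $\vBis^\circ(\cG)$, where they were established in Proposition~\ref{prop: vBiscirc}. There is no hard analytic input here; the statement is essentially a corollary of Theorem~\ref{thm: vertBis} plus the openness of $\vBis^\circ(\cG)$.
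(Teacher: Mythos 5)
Your proposal is correct and follows essentially the same route as the paper: the topology claim is handled by exactly the paper's argument (the subspace topology on $\vBis^\circ(\cG)$ from Proposition \ref{prop: vBiscirc} transported to the other components by the left translations $\lambda_\tau$ of $\Bis(\cG)$), and your componentwise translation argument for the $C^k$-testing claim just unfolds the initiality of $\vBis(\cG)$ in $\Bis(\cG)$, which the paper instead cites directly from Theorem \ref{thm: vertBis} together with the submanifold property of $\Bis(\cG)\subseteq C^\infty(M,G)$. The only caveat, shared with the paper's own proof, is that the componentwise identification of topologies strictly yields that each component carries the subspace topology; your parenthetical claim that openness in the subspace topology can be checked component by component would need the components to be open in that topology, which is exactly the point left open in the remark following the lemma.
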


\begin{proof}
 The statement about $f\colon N \rightarrow C^\infty (M,G)$ follows from $\vBis (\cG)$ being an initial Lie subgroup of $\Bis (\cG)$, Theorem \ref{thm: vertBis}, and the fact that $\Bis (\cG)$ is a submanifold of $C^\infty (M,G)$. 
 
  To see that the topology on $\vBis (\cG)$ coincides with the subspace topology induced by the inclusion $\vBis (\cG) \subseteq C^\infty (M,G)$, recall that the Lie group topology on $\Bis (\cG)$ is the subspace topology induced by $C^\infty (M,G)$ (cf.\ \cite[Proposition 1.3]{AS19}. Further $\vBis^\circ (\cG)$ carries the subspace topology of $C^\infty (M,G)$, Proposition \ref{prop: vBiscirc}, whence the subspace topology induced by $\Bis (\cG)$. Now for $\tau \in \vBis (\cG)$ the left translations $\lambda_\tau \colon \Bis (\cG) \rightarrow \Bis (\cG) , \sigma \mapsto \tau \star \sigma$ is a homeomorphism mapping $\vBis (\cG)$ to $\vBis (\cG)$. We conclude that every component of $\vBis (\cG)$ carries the subspace topology induced by the inclusion $\vBis (\cG) \subseteq \Bis(\cG) \subseteq C^\infty (M,G)$.
\end{proof}

The vertical bisections encode isotropy information of the underlying Lie groupoid. If the Lie groupoid contains only 'small' isotropy groups, the subgroup of vertical bisections is a very small subgroup as the next example shows.

\begin{exa}
 Consider a proper \'{e}tale Lie groupoid\footnote{Proper \'{e}tale Lie groupoid are also known as ``orbifold groupoids`` as they represent orbifolds. See \cite{MaP} for more information.} $\cG$, i.e.\ a Lie groupoid with proper anchor map such that $\alpha,\beta$ are local diffeomorphisms. Then the isotropy subgroup $\cG_x :=\alpha^{-1} (x) \cap \beta^{-1} (x)$ is discrete. 
 Hence $\IG^\circ = \one (M) \subseteq G$ and we have $\vBis^{\circ} (\cG) = \{\one\}$. From the construction of the Lie group $\vBis (\cG)$ via Proposition \ref{prop:Bourbaki} in Theorem \ref{thm: vertBis} it is then clear that $\vBis (\cG)$ is a discrete Lie group. 
\end{exa}

\begin{rem}
 Albeit the smooth structure on $\vBis (\cG)$ is constructed by translating the smooth structure of $\vBis^\circ (\cG)$ along the diffeomorphisms $\lambda_\tau$ it is still not clear whether $\vBis (\cG)$ is a submanifold of $C^\infty (M,G)$ as it is not clear that the connected components of $\vBis^\circ (\cG)$ can be separated from each other in the topology of $C^\infty(M,G)$. 
\end{rem}

However, if the isotropy subgroupoid is an embedded Lie subgroupoid, we can indeed obtain $\vBis (\cG)$ as a submanifold of $C^\infty (M,G)$.

\begin{prop}
 Let $\cG = (G\toto M)$ be a regular Lie groupoid such that the isotropy subgroupoid $\IG$ is an embedded Lie subgroupoid, e.g.\ if $\cG$ is a locally transitive Lie groupoid. Then the Lie group $\vBis (\cG)$ from Theorem \ref{thm: vertBis} is a submanifold of $C^\infty (M,G)$.
\end{prop}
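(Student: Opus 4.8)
The plan is to identify $\vBis(\cG)$, equipped with the Lie group structure from Theorem \ref{thm: vertBis}, with the full bisection group $\Bis(\IG)$ of the embedded isotropy subgroupoid, and then deduce the submanifold property by applying Proposition \ref{prop:bisLie} to $\IG$ together with \cite[Proposition 10.8]{Mic}. First I would establish $\vBis(\cG)=\Bis(\IG)$ as abstract groups: since the source and target maps of $\IG$ coincide, a bisection of $\IG$ is just a smooth section of $\alpha|_{\IG}$, and post-composition with the embedding $J\colon\IG\hookrightarrow\cG$ carries $\Bis(\IG)$ bijectively onto $\vBis(\cG)$ --- a vertical bisection $\sigma$ satisfies $\sigma(x)\in\alpha^{-1}(x)\cap\beta^{-1}(x)=\IG_x$ for all $x$, hence factors through $\IG$, the factorisation being smooth because $\IG$ is embedded, and conversely $J_*$ of a bisection of $\IG$ is a vertical bisection. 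Comparing the multiplication and inversion formulas (which on both sides reduce to the fibrewise group operations in the isotropy groups) shows this bijection is a group isomorphism. As $\IG$ is a finite-dimensional Lie groupoid, Proposition \ref{prop:bisLie} makes $\Bis(\IG)$ a Lie group that is a submanifold of $C^\infty(M,\IG)$; composing with the split embedding $J_*\colon C^\infty(M,\IG)\to C^\infty(M,G)$ of \cite[Proposition 10.8]{Mic} then exhibits $\Bis(\IG)$, hence $\vBis(\cG)$, as a submanifold of $C^\infty(M,G)$ (cf.\ \cite[Lemma 1.4]{SUB}). Write $\mathcal{B}$ for the resulting Lie group structure on $\vBis(\cG)$.

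The remaining step --- and the one I expect to be the main obstacle --- is to check that $\mathcal{B}$ coincides with the structure $\mathcal{A}$ produced in Theorem \ref{thm: vertBis}, since $\mathcal{A}$ is built by a Bourbaki-type transport of charts rather than directly as a manifold of mappings. The key observation is that $\IG^\circ$ is \emph{open} in $\IG$: by Lemma \ref{la:Moerdijk} both are embedded Lie subgroupoids of $\cG$, and their fibres over each $x\in M$ have the same dimension (that of $\IG^\circ$ being the identity component of the Lie group $\IG_x$), so $\IG^\circ$ is an embedded submanifold of full dimension in $\IG$, hence open. Consequently $C^\infty(M,\IG^\circ)$ is open in $C^\infty(M,\IG)$, so the Lie group structure placed on $\vBis^\circ(\cG)=\Bis(\IG^\circ)$ in Proposition \ref{prop: vBiscirc} is precisely the one it carries as an open submanifold of $(\vBis(\cG),\mathcal{B})$; in particular $\vBis^\circ(\cG)$ is an open subgroup of $(\vBis(\cG),\mathcal{B})$. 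By Theorem \ref{thm: vertBis} the same $\vBis^\circ(\cG)$ is an open subgroup of $(\vBis(\cG),\mathcal{A})$, with the same Lie group structure in both cases.

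It then remains to conclude that two Lie group structures on a group that agree on a common open subgroup must coincide. Arguing exactly as in the final paragraph of the proof of Theorem \ref{thm: vertBis} --- writing the identity map near an arbitrary $\tau\in\vBis(\cG)$ as $\lambda_\tau^{\mathcal{B}}\circ\id_{\vBis^\circ(\cG)}\circ\lambda_{\tau^{-1}}^{\mathcal{A}}$ and using that $\vBis^\circ(\cG)$ is open for $\mathcal{A}$ --- one gets that $\id\colon(\vBis(\cG),\mathcal{A})\to(\vBis(\cG),\mathcal{B})$ is smooth, and by symmetry a diffeomorphism, so $\mathcal{A}=\mathcal{B}$. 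Hence $\vBis(\cG)$ with the structure of Theorem \ref{thm: vertBis} is a submanifold of $C^\infty(M,G)$. For the cited example one only has to recall that $\IG$ is an embedded Lie subgroupoid whenever $\cG$ is locally transitive, by \cite[Proposition 1.17]{MAC87}, so the hypothesis applies.
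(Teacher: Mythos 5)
Your argument is correct, but it settles the crucial uniqueness step by a different mechanism than the paper. Both proofs begin identically: identify $\vBis(\cG)\cong\Bis(\IG)$ and use the embedding $\IG\hookrightarrow G$ together with \cite[Proposition 10.8]{Mic} and Proposition \ref{prop:bisLie} to realise this group as a submanifold of $C^\infty(M,G)$ carrying a Lie group structure $\mathcal{B}$. To show $\mathcal{B}$ agrees with the structure $\mathcal{A}$ of Theorem \ref{thm: vertBis}, the paper argues that $(\vBis(\cG),\mathcal{B})$ is again an \emph{initial} Lie subgroup of $\Bis(\cG)$ (repeating the argument of Proposition \ref{prop: vBiscirc}(b)) and then invokes the uniqueness of initial Lie subgroup structures \cite[Lemma II.6.2]{Nee}. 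You instead prove that $\IG^\circ$ is open in $\IG$ (an equidimensional injective immersion of finite-dimensional embedded submanifolds is open; the fibre dimensions agree since $\IG^\circ_x$ is the identity component of $\IG_x$), deduce that $C^\infty(M,\IG^\circ)$ is open in $C^\infty(M,\IG)$ in the fine very strong topology and hence that $\vBis^\circ(\cG)$ is an open subgroup of $(\vBis(\cG),\mathcal{B})$ with the structure of Proposition \ref{prop: vBiscirc}, and conclude by the translation argument that two Lie group structures sharing this open subgroup coincide. This is a legitimate and somewhat more hands-on alternative: it avoids Neeb's uniqueness lemma and the second initiality verification, at the price of the openness argument for $\IG^\circ\subseteq\IG$, which genuinely uses finite-dimensionality (invariance of domain) and the regularity hypothesis (constant isotropy dimension), both available here; the paper's route is shorter once initiality is in hand and does not need $\IG^\circ$ to be open in $\IG$. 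One small correction: for the example, the paper does not cite \cite[Proposition 1.17]{MAC87} (that reference concerns \emph{locally trivial} groupoids); for a locally transitive groupoid the anchor $(\alpha,\beta)$ is a submersion, so one simply observes that $\IG=(\alpha,\beta)^{-1}(\Delta M)$ is an embedded Lie subgroupoid as the preimage of the diagonal. You should replace your citation by this direct argument (or check that the cited statement really covers the locally transitive case).
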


\begin{proof}
 Since $\IG$ is an embedded submanifold of $G$, we can argue as in the proof of Proposition \ref{prop: vBiscirc} to see that $\vBis (\cG) \cong \Bis (\IG) \subseteq C^\infty (M,G)$ is a submanifold. Moreover, the manifold structure turns $\vBis (\cG)$ into a Lie group. To distinguish the new Lie group structure from the one inherited from Theorem \ref{thm: vertBis}, we write $\widetilde{\vBis}(\cG)$ for this Lie group.
 As $\widetilde{\vBis} (\cG)$ is an embedded submanifold of $C^\infty (M,G)$, we can argue as in the proof of (in particular part (b)) of Proposition \ref{prop: vBiscirc} to see that $\widetilde{\vBis} (\cG)$ is an initial Lie subgroup of $\Bis (\cG)$. As any subgroup of a given Lie subgroup carries at most one structure as an initial Lie subgroup \cite[Lemma II.6.2 ]{Nee}, we have $\vBis (\cG) = \widetilde{\vBis} (\cG)$ as infinite-dimensional Lie groups. 

 Finally, assume that $\cG$ is locally transitive, i.e.\ its anchor $(\alpha, \beta) \colon G \rightarrow M\times M$ is a submersion. Then $\IG = (\alpha, \beta)^{-1} (\Delta M)$ is an embedded submanifold of $G$, where $\Delta M$ is the diagonal embedded in $M\times M$.
\end{proof}

 To provide a different geometric interpretation of the vertical bisections we deviate from our usual convention and consider an infinite-dimensional Lie groupoid. In \cite[Definition 2.1]{SaW2} we have constructed an (infinite-dimensional) action groupoid from the natural action of $\Bis (\cG)$ on $M$:
 
\begin{numba}
 The group $\Bis(\cG)$ acts on $M$, via the natural action of $\Diff (M)$ on $M$ composed with the morphism $\beta_* \colon \Bis(\cG)\to \Diff(M), \sigma \mapsto \beta \circ \sigma$. We can thus define an action Lie groupoid $\cB(\cG)\coloneq \Bis(\cG)\ltimes M$, with source and target projections defined by
 $\alpha_{\cB}(\sigma,m)=m$ and $\beta_{\cB}(\sigma,m)=\beta(\sigma(m))$.
 The multiplication on $\cB(\cG)$ is defined by
 \begin{equation*}
  (\sigma,\beta_{\cG}(\tau(m)))\cdot (\tau,m)\coloneq(\sigma \star \tau,m).
 \end{equation*}
 The Lie groupoid $\cB (\cG)$ plays a crucial r\^{o}le in the reconstruction of the Lie groupoid $\cG$ from its group of bisections (see \cite[Section 2]{SaW} for more information on this process). 
 From the definition of the action Lie groupoid, one immediately obtains that 
 the vertical bisections determine the isotropy subgroupoid, i.e.\ $\mathcal{IB}(\cG) = \vBis (\cG) \ltimes M$.
\end{numba}

\begin{numba}As a final remark, the group $\vBis (\cG)$ can be generalised (as observed by H.\ Amiri) by considering $\{f\in C^\infty (G,G) \mid \alpha \circ f = \alpha = \beta \circ f, x\mapsto xf(x) \in \Diff (G)\}$. This set turns out to be a subgroup of the group $S_{\cG}(\alpha)$ from \cite{AS17} (which generalises $\Bis (\cG)$). Similar techniques to the ones in the present paper can be used to turn the generalised group into a Lie group.
\end{numba}

\paragraph{Acknowledgments} 
The author is indepted to J.N.\ Mestre for pointing out Example \ref{exa:joao} and useful literature he was unaware of.\footnote{cf.\ \url{https://mathoverflow.net/questions/329939/isotropy-subgroupoid-of-a-regular-lie-groupoid}} 
Furthermore, he thanks S.\ Paycha and H.\ Amiri for interesting discussions concerning the subject of this work. Furthermore, he thanks the anonymous referee for insightful comments which helped improve the article.
%
\appendix
\section{Infinite-dimensional calculus, Lie groups and Lie groupoids}\label{app:calc}

In this appendix we recall some basic facts on the infinite-dimensional analysis used throughout the text. 
For more information we refer the reader to~\cite{Bas}, and the generalizations thereof (see \cite{GaN} and \cite{AaS},
also \cite{RES,Ham,Mic}, and~\cite{Mil}). As already remarked, we are working in the Bastiani calculus, where $f$ is $C^k$-map if all iterated directional derivatives up to order $k$ exist and are continuous.

\begin{defn}
For a smooth map $f\colon M\rightarrow N$ between manifolds, we say 
(see \cite{Ham,SUB}) that $f$ is
\begin{enumerate}
\item a \emph{submersion} if for each $x\in M$ we can choose a chart $\psi$
of~$M$ around~$x$ and a chart $\phi$ of~$N$
around $f(x)$ such that
$\phi\circ f\circ~\psi^{-1}$
is the restriction of a continuous linear map with continuous linear right inverse,
\item an \emph{immersion} if for every $x\in M$ there are charts such that we can always achieve that $\phi\circ f\circ \psi^{-1}$ is the restriction of a continuous linear map admitting a continuous linear left inverse.
\item an \emph{embedding} if $f$ is an immersion and a topological embedding.
\end{enumerate}
\end{defn}
Note that the above definitions (of submersions etc.) are adapted to the infinite-dimensional setting we are working in. In general they are not equivalent to the usual characterisations known from the finite-dimensional setting. See e.g.\ \cite{SUB}.
\begin{numba}[Submanifolds]
Let $M$ be a $C^k$-manifold.
A subset $N\sub M$ is called a \emph{submanifold}
if, for each $x\in N$, there exists a chart $\phi\colon U_\phi\to V_\phi\sub E_\phi$
of~$M$ with $x\in U_\phi$ and a closed vector subspace $F\sub E_\phi$
such that $\phi(U_\phi\cap N)=V_\phi\cap F$. Then~$N$ is a $C^k$-manifold
in the induced topology, using the charts $\phi|_{U_\phi\cap N}\colon U_\phi\cap N\to V_\phi\cap F$.
\end{numba}
\begin{numba}\label{into-sub}
If $N$ is a submanifold of a smooth manifold~$M$ and $f\colon L\to M$ a map on a smooth manifold~$L$
such that $f(L)\sub N$, then $f$ is smooth if and only if its corestriction
$f|^N\colon L\to N$ is smooth for the smooth manifold structure induced on~$N$.
\end{numba}

\subsection*{Lie groups and Lie groupoids}

We follow here \cite{Mil,Nee,GaN} for the basic theory concerning infinite-dimensional Lie groups (modeled on locally convex spaces) and \cite{MAC,Mein17} for (finite-dimensional) Lie groupoids and Lie algebroids.\footnote{The concept of infinite-dimensional Lie groupoid is clear, cf.\ \cite{SaW,SaW2} and \cite{BaGaJaP} tough not needed for most of the text.}


\begin{rem}
For a Lie group $G$ we write $\one$ for the unit element. As in the finite dimensional setting one can associate to $G$ a Lie algebra $\Lf (G) \cong T_{\one} G$ whose Lie bracket is constructed from the Lie bracket of left invariant vector fields on $G$. 
\end{rem}

\begin{defn}
 Let $G,H$ be (infinite-dimensional) Lie groups and $\varphi \colon H \rightarrow G$ be an injective morphism of Lie groups. We call $H$ an \emph{initial Lie subgroup} if the induced Lie algebra morphism $\Lf (\varphi) \colon \Lf (H) \rightarrow \Lf (G)$ is injective, and for each $C^k$-map $f\colon N\rightarrow G, (k \in \N \cup \{\infty\})$ from a $C^k$-manifold $N$ to $G$ whose image $\text{im}(f)$ is containted in $H$, the corresponding map $\varphi^{-1} \circ f \colon N \rightarrow H$ is $C^k$.
\end{defn}

We furthermore need the following construction principle for Lie groups, whose proof for manifolds modeled on Banach spaces (which generalises verbatim to our more general setting) can be found in \cite[III. \S1.9, Proposition 18]{Bou}.

\begin{prop}\label{prop:Bourbaki}
 Let $G$ be a group and $U,V$ be subsets of $G$ such that $\one \in V = V^{-1}$ and $V V \subseteq U$. Suppose that $U$ is equipped with a smooth manifold structure such that $V$ is open in $U$ which turns the inversion $\iota \colon V\rightarrow V\subseteq U$ and the multiplication $\mu \colon V \times V \rightarrow U$ -- induced by the group -- into smooth maps. Then the following holds:
 \begin{enumerate}
  \item There is a unique smooth manifold structure on the subgroup $G_0 := \langle V\rangle$ of $G$ generated by $V$ such that $G_0$ becomes a Lie group, $V$ is open in $G_0$, and such that $U$ and $G_0$ induce the same smooth manifold structure on $V$,
  \item Assume that for each $g$ in a generating set of $G$, there is an open identity neighborhood $W \subseteq U$ such that $gWg^{-1} \subseteq U$ and $c_g \colon W \rightarrow U, h \mapsto ghg^{-1}$ is smooth. Then there is a unique smooth manifold structure on $G$ turning $G$ into a Lie group such that $V$ is open in $G$ and both $G$ and $U$ induce the same smooth manifold structure on the open subset $V$.
 \end{enumerate}
\end{prop}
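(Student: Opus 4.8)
The plan is to build the manifold structure by transporting a single chart around $\one$ to every point of $\langle V\rangle$ (resp.\ $G$) by left translations, and then to verify smoothness of the group operations by reducing everything to \emph{conjugations}, which fix $\one$ and therefore compose well near $\one$.

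First I would record the basic smoothness input. For $s\in V$ the map $L_s:=\mu(s,\cdot)$ is defined and smooth on the open set $V^{(s)}:=\{v\in V: sv\in V\}=\mu(s,\cdot)^{-1}(V)$, which is open because $V$ is open in $U$; using $V=V^{-1}$ and smoothness of $\iota$, its inverse is $L_{s^{-1}}$, so $L_s$ is a diffeomorphism between open subsets of the manifold $V$. Next, since $\mu$ is continuous and $\mu(\one,\one)=\one\in V$ with $V$ open in $U$, I can choose an open, symmetric identity neighbourhood $V_0=V_0^{-1}\subseteq V$ with $V_0V_0\subseteq V$. I then define the candidate atlas by the chart domains $gV_0$ with chart maps $\kappa_g:=\lambda_{g^{-1}}|_{gV_0}$ into $V_0$, post-composed with the charts of the manifold $V_0$. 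The crucial compatibility check is that if $gV_0\cap hV_0\neq\emptyset$ then, picking any $x=ga=hb$ in the overlap with $a,b\in V_0$, one has $h^{-1}g=ba^{-1}\in V_0V_0\subseteq V$; the transition map is then exactly $L_{h^{-1}g}$ restricted to an open subset of $V_0$, which is smooth by the first step. Hausdorffness follows since $\{\one\}$ is closed: a point $x\neq\one$ either lies outside $V_0$, where $xV_0$ is an open neighbourhood missing $\one$, or lies in $V_0\setminus\{\one\}$ and is separated using that the manifold $V_0$ is Hausdorff.

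With the manifold structure in hand, left translation $\lambda_c$ reads as the identity $V_0\to V_0$ in the charts $\kappa_g$ and $\kappa_{cg}$, hence is smooth for every $c$; likewise $\mu$ restricted near $(\one,\one)$ and $\iota$ restricted near $\one$ are smooth directly from the hypotheses, after the harmless chart changes into the $\kappa_g$. The heart of the argument is to show that conjugation $C_c(h):=chc^{-1}$ is smooth on some neighbourhood of $\one$ for every $c$. For a single $s\in V$ this is immediate, since $h\mapsto \mu(\mu(s,h),s^{-1})$ is a composition of smooth maps taking values in $V$ on a small enough neighbourhood of $\one$. For $c=s_1\cdots s_n$ with $s_i\in V$ one has $C_c=C_{s_1}\circ\cdots\circ C_{s_n}$, and since each factor is smooth near $\one$ and fixes $\one$, the composite is smooth near $\one$ on a suitably shrunk neighbourhood. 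This is exactly the point where one must \emph{not} try to factor the plain left translation $L_c$ through the generators, since the intermediate products would leave the chart domain $V_0$, whereas conjugation keeps passing through the fixed point $\one$. I regard this observation as the main obstacle of the proof.

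Given smoothness of conjugations near $\one$, global smoothness of the group operations is routine: writing $x=a\xi$, $y=b\eta$ with $\xi,\eta$ near $\one$, one has $xy=\lambda_{ab}\bigl(\mu(C_{b^{-1}}(\xi),\eta)\bigr)$ and $x^{-1}=\lambda_{a^{-1}}\bigl(C_a(\iota(\xi))\bigr)$, compositions of maps already shown to be smooth; hence $\langle V\rangle$ is a Lie group. That $V$ is open with its original structure follows locally: for $w\in V$ the set $V_0^{w}:=\{v\in V_0: wv\in V\}$ is an open identity neighbourhood, $wV_0^{w}\subseteq V$ is open in the chart $\kappa_w$, and $\kappa_w=L_{w^{-1}}$ is a diffeomorphism for the original structure on $V$ by the first step. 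Uniqueness comes from noting that in any admissible structure the left translations are automatically smooth and agree with the given structure on $V$, so the atlas $\{\kappa_g\}$ is forced, whence the identity map between any two admissible structures is smooth with smooth inverse. For part (b) the entire construction is unchanged except that the chart index now ranges over all of $G$ -- the transition maps are still the single translations $L_{h^{-1}g}$ with $h^{-1}g\in V_0V_0\subseteq V$ -- and the only new input needed is smoothness of $C_c$ near $\one$ for arbitrary $c\in G$. Taking the generating set symmetric, the hypothesis on $c_g$ supplies this for each generator, and the set of $c$ for which $C_c$ is a local diffeomorphism at $\one$ is a subgroup, hence all of $G$; the open subgroup $\langle V\rangle$ then carries precisely the structure from part (a).
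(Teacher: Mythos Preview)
The paper does not give its own proof of this proposition; it simply cites Bourbaki \cite[III.\ \S1.9, Proposition 18]{Bou} and remarks that the Banach-space argument carries over verbatim to the locally convex setting. Your write-up is precisely that standard construction: transport one identity chart by left translations, observe that transition maps are single translations $L_{h^{-1}g}$ with $h^{-1}g\in V$, and reduce smoothness of the group operations to smoothness of conjugations near~$\one$, exploiting that conjugations fix~$\one$ so that finite compositions stay in the chart domain. This is the right approach and essentially the one Bourbaki gives.

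One genuine wrinkle in your part~(b): you write ``taking the generating set symmetric'', but the hypothesis is only assumed for a \emph{fixed} generating set $S$, and you cannot simply enlarge $S$ to $S\cup S^{-1}$ and retain the assumption. You then invoke that the set of $c$ for which $C_c$ is a local diffeomorphism at $\one$ is a subgroup --- true --- but the hypothesis only tells you that $c_g$ is \emph{smooth} near $\one$ for $g\in S$, not that it is a local diffeomorphism; upgrading requires smoothness of $c_{g^{-1}}$, which is exactly what is not assumed. In the Bastiani setting there is no inverse function theorem to close this gap. In Bourbaki's original the condition is imposed for every $g\in G$, and in the paper's sole application (the proof of Theorem~\ref{thm: vertBis}) one checks it for all $\tau\in\vBis(\cG)$ anyway, so nothing is lost in practice; but as the proposition is literally stated here, this step of your argument needs either a symmetry hypothesis on the generating set or an additional word of justification.

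A smaller expository point: your Hausdorffness argument only shows $\{\one\}$ is closed, which suffices once one knows the space is a topological group, but at that stage of the proof continuity of the operations has not yet been established. It is cleaner to separate $\one$ from $y\neq\one$ directly (if $y\in V_0$ use that $V_0$ is Hausdorff; if $y\notin V_0$ pick a symmetric $W$ with $W^2\subseteq V_0$ and note $W\cap yW=\emptyset$), then transport by left translations.
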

%

\begin{numba}
A groupoid~$\mathcal{G} = (G\toto M)$, with source map $\alpha\colon G\to M$ and target map $\beta\colon G\to M$ is a \emph{Lie groupoid}, if the following holds: $G$ and~$M$ are smooth manifolds, $\alpha$ and~$\beta$ are $C^\infty$-submersions and the multiplication map $G^{(2)}\to G$, the inversion map $G\to G$
and the identity-assigning map $M\to G, x\mto \mathbf{1}_x$ are smooth.
Recall that one can associate to every Lie groupoid $\cG$ a Lie algebroid which we denote by $\Lf (\cG)$.
\end{numba}

\begin{defn}
Let $F \colon \mathcal{H} \rightarrow \mathcal{G}$ be a morphism of Lie groupoids. We call $\mathcal{H}$ 
\begin{itemize}
\item \emph{immersed subgroupoid} of $\mathcal{G}$ if $F$ and the induced map on the base are injective immersions.
\item \emph{embedded subgroupoid} of $\mathcal{G}$ if $F$ and the induced map on the base are embeddings. 
\end{itemize}
\end{defn}

\begin{defn}
 For a Lie groupoid $\cG = (G\toto M)$ we define the following:
 \begin{itemize}
 \item Denote for $m\in M$ by $C_m$ the connected component of $\one_m$ in $\alpha^{-1} (m)$. Then
  $$C(\cG) := \bigcup_{n\in M} C_m$$
  By \cite[Proposition 1.5.1]{MAC} we obtain a wide Lie subgroupoid $C(\cG) \toto M$ of $\cG$, called the \emph{identity-component subgroupoid} of $\cG$.
  \item the \emph{isotropy subgroupoid} $\IG := \{ g \in G \mid \alpha (g) = \beta (g)\},$
        and the identity component subgroupoid of the isotropy groupoid $\IG^\circ := C(\IG)$.
 \end{itemize}
\end{defn}

Endowed with the subspace topology, $\IG$ (and also $\IG^\circ$) are topological bundles of Lie groups.  
In general the isotropy subgroupoid is not a Lie subgroupoid:

\begin{exa}\label{exa:notLie}
 Let $\mathcal{A} = (\mathbb{S}^1 \times \R^2 \toto \R^2$ be the action groupoid associated to the canonical action of the circle group $\Sph^1$ on $\R^2$ via rotation. 
 Then $$\mathcal{IA} = \Sph^1 \times \{0\} \cup \bigsqcup_{x\in \R^2 \setminus \{0\}} \{1\} \times \{x\} \subseteq \Sph^1 \times \R^2$$ is not a submanifold of $\Sph^1 \times \R^2$ and thus $\mathcal{IA}$ can not be a Lie groupoid of $\mathcal{A}$.
\end{exa}

\addcontentsline{toc}{section}{References}

\bibliography{vBis}

\end{document}